\documentclass[11pt,a4paper]{article}
\include{latin}

\usepackage{nomencl}

\usepackage{kerkis}

\usepackage[square,sort,comma,numbers]{natbib}
\usepackage{amsmath}
\usepackage{amsfonts,amscd}
\usepackage[all]{xy}
\usepackage{amssymb}
\usepackage{makeidx}
\usepackage{graphicx}
\usepackage{tikz}
\usetikzlibrary{matrix}
\usepackage{framed}
\usepackage{wasysym}

\usepackage[...]{youngtab}

\usepackage{geometry}
\geometry{margin=1.2in}

\usepackage{amsthm}

\newtheorem{theorem}{Theorem}[section]
\newtheorem{lemma}[theorem]{Lemma}
\newtheorem{proposition}[theorem]{Proposition}
\newtheorem{corollary}[theorem]{Corollary}


      

\begin{document}
\title{The normal closure of a power of a half-twist has infinite index in the mapping class group of a punctured sphere}
\author{Charalampos Stylianakis}
\date{ }
\maketitle

\begin{abstract}
\textit{In this paper we show that the normal closure of the m$^{th}$ power of a half-twist has infinite index in the mapping class group of a punctured sphere. Furthermore, in some cases we prove that the quotient of the mapping class group of the punctured sphere by the normal closure of a power of a half-twist contains a free abelian subgroup. As a corollary we prove that the quotient of the hyperelliptic mapping class group of a surface of genus at least two by the normal closure of the m$^{th}$ power of a Dehn twist has infinite order, and for some integers $m$ the quotient contains a free nonabelian subgroup. As a second corollary we recover a result of Coxeter: the normal closure of the m$^{th}$ power of a half-twist in the braid group of at least five strands has infinite index if n is at least four. Our method is to reformulate the Jones representation of the mapping class group of a punctured sphere, using the action of Hecke algebras on W-graphs, as introduced by Kazhdan-Lusztig.}
\end{abstract}

\section{Introduction}

Let $\Sigma_{g,n}$ be a surface of genus $g$ with $n$ marked points. The mapping class group of $\Sigma_{g,n}$, denoted by $\mathrm{Mod}(\Sigma_{g,n})$, consists of those isotopy classes of homeomorphisms that preserve both the orientation of $\Sigma_{g,n}$, and the set of marked points. When $n=0$, we will simply write $\Sigma_g$. If $g=0$ the mapping class group $\mathrm{Mod}(\Sigma_{0,n})$ is generated by half-twists (homeomorphisms that interchange two marked points of $\Sigma_{0,n}$). The main result of this paper is the following.

\paragraph{Theorem A} \textit{The normal closure of the $m^{th}$ power of a half-twist has infinite index in $\mathrm{Mod}(\Sigma_{0,n})$ if $n\geq6$ is even and $m\geq5$.}\\

The proof of Theorem A relies on the Jones representation of the mapping class group $\mathrm{Mod}(\Sigma_{0,n})$, where $n\geq 6$ is even \cite{JO}. The following theorem was suggested by Funar.

\paragraph{Theorem B} \textit{The quotient $\mathrm{Mod}(\Sigma_{0,n})$ by the closure of the $m^{th}$ power of a half-twist contains a free nonabelian subgroup, if $n\geq6$ is even, and $m\notin \{ 2,4,6,10\}$ if $m$ is even, and $m \notin \{ 1,3,5 \}$ if $m$ is odd.}\\

The proof of Theorem B we give in this paper uses the Jones representation of the mapping class group of the sphere with marked points. As pointed out by Funar, there is an alternate proof of Theorem B in the case where $m$ even. Funar-Kohno used quantum representations to prove that the quotient of the mapping class group of the sphere with 4 boundary components by the normal closure of the $k^{th}$ power of Dehn twists contains free nonabelian subgroup \cite[Proposition 3.2]{FK}. Therefore, the quotient of the mapping class group of the sphere with $n$ boundary components by the normal closure of the $k^{th}$ power of Dehn twists contains free nonabelian subgroup. The surjection of the latter quotient on the quotient $\mathrm{Mod}(\Sigma_{0,n})$ by the closure of the $2k^{th}$ power of a half-twist has finite kernel generated by the Dehn twists about the boundary components; the image contains a free nonabelian subgroup.

\paragraph{Results for the hyperelliptic mapping class group.} Let $c$ be a nonseparating simple closed curve in $\Sigma_g$; denote by $T_c$ a Dehn twist about $c$. In Birman's problem list, she asked whether the normal closure of $T^2_c$ in $\mathrm{Mod}(\Sigma_g)$ has infinite index if $g\geq 3$ \cite[Question 28]{JB}. It is well known that the normal closure of $T^2_c$ has finite index in $\mathrm{Mod}(\Sigma_g)$ when $g=1$ or 2.

Humphries answered Birman's question by proving that, in fact, the normal closure of $T^2_c$ has finite index in $\mathrm{Mod}(\Sigma_g)$ \cite[Theorem 1]{HU}. Let $\mathrm{SMod}(\Sigma_g)$ denote the hyperelliptic mapping class group, that is, those elements of $\mathrm{Mod}(\Sigma_g)$ that commute with a fixed hyperelliptic involution (an element of order 2 of $\mathrm{Mod}(\Sigma_g)$ that acts as $-\mathrm{Id}$ on $\mathrm{H}_1(\Sigma_g)$). In the same paper Humphries used the fact that $\mathrm{Mod}(\Sigma_2)=\mathrm{SMod}(\Sigma_2)$ to show that if $m\geq 4$, then the normal closure of $T^m_c$ has infinite index in $\mathrm{Mod}(\Sigma_2)$ \cite[Theorem 4]{HU}. As a corollary of the Theorem A, we extend Humphries' result for $\mathrm{SMod}(\Sigma_g)$ to the case $g > 2$.

\begin{corollary}
\label{main theorem}
Let $T_c \in \mathrm{SMod}(\Sigma_g)$ be a Dehn twist about a nonseparating simple closed curve $c$. If $m \geq 5$, and $g\geq 2$ then the normal closure of $T^m_c$ has infinite index in $\mathrm{SMod}(\Sigma_g)$.
\end{corollary}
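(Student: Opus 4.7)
My plan is to deduce this corollary directly from Theorem~A by pushing everything down through the Birman--Hilden correspondence. For $g\geq 2$ the hyperelliptic involution $\iota$ lies in the centre of $\mathrm{SMod}(\Sigma_g)$, and Birman--Hilden supplies a short exact sequence
\[
1 \longrightarrow \langle \iota \rangle \longrightarrow \mathrm{SMod}(\Sigma_g) \xrightarrow{\;q\;} \mathrm{Mod}(\Sigma_{0,2g+2}) \longrightarrow 1,
\]
where the $2g+2$ marked points on the quotient sphere $\Sigma_g/\langle\iota\rangle$ are the images of the Weierstrass points.

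The key observation is then that $q(T_c)$ is a half-twist. Indeed, since $T_c\in \mathrm{SMod}(\Sigma_g)$, the curve $c$ is $\iota$-invariant up to isotopy; because $c$ is nonseparating, $\iota$ must restrict to $c$ as an involution with two fixed points, so $c$ descends to an arc $\alpha$ joining two of the marked points on the sphere and $T_c$ descends to the half-twist $H$ swapping the endpoints of $\alpha$. Let $N$ denote the normal closure of $T_c^m$ in $\mathrm{SMod}(\Sigma_g)$. Because $q$ is surjective, $q(N)$ coincides with the normal closure of $H^m$ in $\mathrm{Mod}(\Sigma_{0,2g+2})$, and $q$ induces a surjection
\[
\mathrm{SMod}(\Sigma_g)/N \twoheadrightarrow \mathrm{Mod}(\Sigma_{0,2g+2})/q(N).
\]
Under the hypotheses $g\geq 2$ and $m\geq 5$ we have $n:=2g+2\geq 6$ even, so Theorem~A forces the right-hand quotient to be infinite, and consequently $N$ has infinite index in $\mathrm{SMod}(\Sigma_g)$.

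I do not anticipate a serious obstacle here: once Theorem~A is granted, the whole argument is essentially mechanical, and in particular there is no further representation-theoretic input required. The only step that deserves care is the identification of $q(T_c)$ with a half-twist, which reduces to the standard dictionary between $\iota$-invariant simple closed curves on $\Sigma_g$ and arcs or curves on the quotient sphere; this dictionary sends symmetric nonseparating curves precisely to arcs between two marked points, and Dehn twists about them precisely to the corresponding half-twists.
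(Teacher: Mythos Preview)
Your argument is correct and follows essentially the same route as the paper: both use the Birman--Hilden surjection $\mathrm{SMod}(\Sigma_g)\twoheadrightarrow \mathrm{Mod}(\Sigma_{0,2g+2})$ to push the normal closure of $T_c^m$ onto the normal closure of the $m^{th}$ power of a half-twist, and then invoke Theorem~A. The paper's proof is very terse (it simply cites the explicit description $T_{c_i}\mapsto H_i$ from Section~2), whereas you spell out the geometric reason why a symmetric nonseparating curve descends to an arc and its Dehn twist to a half-twist; this is a welcome elaboration but not a different idea.
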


We note that the curve $c$ in the theorem above is symmetric, that is, it remains fixed under the action of a fixed hyperelliptic involution.

\begin{proof}
By Birman-Hilden's theorem we have that the map  $\mathrm{SMod}(\Sigma_g) \rightarrow \mathrm{Mod}(\Sigma_{0,2g+2})$ is a surjective homomorphism \cite[Theorem 1]{BH}. The result follows from the fact that the normal closure of $T^n_c$ in $\mathrm{SMod}(\Sigma_g)$ surjects onto the normal closure of a half-twist in $\mathrm{Mod}(\Sigma_{0,n})$. 
\end{proof}


Our methods in the proof of Theorem A are inspired by Humphries' original proof, which relies on Jones' explicit calculation of his representation of $\mathrm{Mod}(\Sigma_{0,2g+2})$ when $g = 2$.  Unfortunately, the explicit computation of the matrices when g > 2 is difficult. We avoid this difficulty by finding a certain block form of the representation in the general case.

We note that it is not obvious how to extend the Jones representation from $\mathrm{SMod}(\Sigma_g)$ to the whole mapping class group $\mathrm{Mod}(\Sigma_g)$ (see for example \cite[Section 2.3]{KA}), and hence Humphries' techniques cannot be used for $\mathrm{Mod}(\Sigma_g)$ when $g>2$. However, Funar successfully used TQFT representations of the mapping class group to prove that the normal closure of $T^m_c$ has infinite index in $\mathrm{Mod}(\Sigma_g)$, if $g \geq 2$ and if $m \neq 2,3,4,6,8,12$ \cite[Corollary 1.2]{FU}.

Funar-Kohno improved Funar's result above by proving that if $g\geq 3$, and $m \notin \{ 3, 4, 8, 12, 16, 24 \}$ or $g= 2$, $m$ even and $m \notin \{ 4, 8, 12, 16, 24, 40 \}$ the quotient of $\mathrm{Mod}(\Sigma_g)$ by the normal closure of the $m^{th}$ power of a Dehn twist contains a free nonabelian subgroup \cite{FK}. We can use Theorem B to prove a similar theorem for the hyperelliptic mapping class group.

\begin{corollary}
We assume that $g\geq 2$, $m\geq 4$, and $m\notin \{ 1,2,3,4,6,10\}$ is even. Then the quotient of $\mathrm{SMod}(\Sigma_g)$ by the normal closure of the $m^{th}$ power of a Dehn twist contains a free nonabelian subgroup.
\end{corollary}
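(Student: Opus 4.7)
The plan is to mirror the proof of Corollary \ref{main theorem} and reduce to Theorem B via the Birman--Hilden surjection
$$\pi\colon \mathrm{SMod}(\Sigma_g) \twoheadrightarrow \mathrm{Mod}(\Sigma_{0,2g+2}).$$
First I would observe that a symmetric nonseparating simple closed curve $c\subset\Sigma_g$ descends to an arc in the quotient sphere joining two marked points, so that $\pi(T_c)$ is the half-twist $h$ exchanging those marked points. Consequently $\pi$ carries the normal closure of $T_c^m$ onto the normal closure of $h^m$, and this induces a surjection
$$\mathrm{SMod}(\Sigma_g)/\langle\!\langle T_c^m\rangle\!\rangle \twoheadrightarrow \mathrm{Mod}(\Sigma_{0,2g+2})/\langle\!\langle h^m\rangle\!\rangle.$$

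Next I would check that the hypotheses of Theorem B are satisfied on the right. Since $g\geq 2$, the number of marked points $n = 2g+2$ is even and at least six; and the corollary's assumption that $m$ is even, $m\geq 4$, and $m\notin\{1,2,3,4,6,10\}$ is precisely the even-case exclusion $m\notin\{2,4,6,10\}$ appearing in Theorem B. Theorem B therefore produces a free nonabelian subgroup $F$ inside $\mathrm{Mod}(\Sigma_{0,2g+2})/\langle\!\langle h^m\rangle\!\rangle$.

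Finally I would lift $F$ through the surjection. Choosing any preimages $\tilde a,\tilde b$ in $\mathrm{SMod}(\Sigma_g)/\langle\!\langle T_c^m\rangle\!\rangle$ of a pair of free generators of $F$, any nontrivial reduced word in $\tilde a,\tilde b$ maps to the same nontrivial reduced word in the free group $F$, and hence cannot represent the identity upstairs. So $\tilde a,\tilde b$ generate a free nonabelian subgroup of $\mathrm{SMod}(\Sigma_g)/\langle\!\langle T_c^m\rangle\!\rangle$, proving the corollary. The only point deserving any care---and what I view as the main (very mild) obstacle---is the Birman--Hilden identification of $\pi(T_c)$ with a half-twist, but this is exactly the input already used in Corollary \ref{main theorem} and requires no new argument.
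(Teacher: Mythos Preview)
Your proposal is correct and follows exactly the route the paper takes: the paper simply states that the proof of this corollary is the same as that of Corollary~\ref{main theorem}, i.e., apply the Birman--Hilden surjection $\mathrm{SMod}(\Sigma_g)\twoheadrightarrow \mathrm{Mod}(\Sigma_{0,2g+2})$ sending $T_c$ to a half-twist, pass to quotients, and invoke Theorem~B. Your write-up just makes explicit the (trivial) lifting of the free subgroup through the induced surjection of quotients, which the paper leaves implicit.
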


The proof of Corollary 1.2 is the same as the proof of Corollary 1.1.

\paragraph{Remarks on Theorems A and B.} Theorem B gives a stronger result than Theorem A in the case where the power $m$ is even. The methods we use to prove them are different. The proof of Theorem A uses an explicit matrix calculation that we develop in this paper. In fact we give a new approach to the Jones representation by using a different definition of the Hecke algebras and we use the notion of W-graphs introduced by Kazhdan-Lusztig. In the proof of Theorem B we show how to modify the Burau representation so that its image is contained in the Jones representation.

Quantum representations deal with the quotient of the mapping class group of the sphere with boundary components by the normal closure of powers of Dehn twists. The representations we construct in this paper deal with representations of the quotient of the mapping class group $\mathrm{Mod}(\Sigma_{0,n})$ of the sphere with even number of marked points by the normal closure of powers of half-twists. One could find relations between the two representations, and this may give new representations of the quotient of $\mathrm{Mod}(\Sigma_{0,n})$ by the normal closure of powers of half-twists. These representation might give more information about the latter quotient.

\paragraph{Results for braid groups.} We denote the disc with $n$ marked points in its interior by $D_n$. The mapping class group $\mathrm{Mod}(D_n)$ of $D_n$ consists of isotopy classes of homeomorphisms that preserve both the orientation of $D_n$, the set of marked points, and fix the boundary pointwise. The braid group $B_n$ is isomorphic to $\mathrm{Mod}(D_n)$. It is a well known result due to Artin that $B_n$ is generated by half-twists, that is, homeomorphisms that interchange two marked points \cite[Sections 1.2, 1.3]{BB}.


A similar result as in Theorem A holds for the braid group $B_n$. Coxeter used hyperbolic geometry to prove that the normal closure of the $m^{th}$ power of a half-twist has finite index in $B_n$ if and only if $(n-2)(m-2)<4$ \cite[Section 10]{C2}. As a second corollary of our Theorem A, we recover Coxeter's theorem when $n\geq4$ and $m\geq 4$.
\begin{corollary}
The normal closure of the $m^{th}$ power of a half-twist has infinite index in the braid group $B_n$, if $m \geq 5$, and $n \geq 4$.
\label{coxthem}
\end{corollary}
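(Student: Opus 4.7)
The strategy is to reduce the statement to Theorem~A by exploiting the standard surjections from the braid group onto mapping class groups of punctured spheres. Capping the boundary circle of the disk $D_n$ with a once-punctured disk produces the surjective homomorphism
$$\varphi : B_n \twoheadrightarrow \mathrm{Mod}(\Sigma_{0,n+1})$$
whose kernel is the center $\langle \Delta^2 \rangle$ of $B_n$ (for $n\geq 3$); composing further with the puncture-forgetting map $\mathrm{Mod}(\Sigma_{0,n+1}) \twoheadrightarrow \mathrm{Mod}(\Sigma_{0,n})$ of the Birman exact sequence yields a second surjection
$$\psi : B_n \twoheadrightarrow \mathrm{Mod}(\Sigma_{0,n}).$$
A defining arc of a half-twist in $B_n$ may be chosen in the interior of $D_n$, away from both the boundary and the auxiliary puncture that gets forgotten, so under either map a half-twist of $B_n$ is sent to a half-twist of the target.

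Now fix a half-twist $\sigma\in B_n$ and let $N\trianglelefteq B_n$ be the normal closure of $\sigma^m$; denote by $N_k\trianglelefteq \mathrm{Mod}(\Sigma_{0,k})$ the analogous normal closure of the $m$-th power of a half-twist. Because $\varphi$ and $\psi$ are surjective and carry $\sigma^m$ to an $m$-th power of a half-twist, they descend to surjections
$$B_n/N \twoheadrightarrow \mathrm{Mod}(\Sigma_{0,n+1})/N_{n+1}, \qquad B_n/N \twoheadrightarrow \mathrm{Mod}(\Sigma_{0,n})/N_n.$$
If $n\geq 5$ is odd then $n+1\geq 6$ is even, so Theorem~A applied to $\mathrm{Mod}(\Sigma_{0,n+1})$ yields an infinite right-hand side, forcing $[B_n:N]=\infty$. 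If $n\geq 6$ is even, Theorem~A applied directly to $\mathrm{Mod}(\Sigma_{0,n})$ via $\psi$ gives the same conclusion. This covers all $n\geq 5$.

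The main obstacle is the single remaining case $n=4$, because neither $\Sigma_{0,4}$ nor $\Sigma_{0,5}$ lies in the range of Theorem~A. I would handle this boundary case by a separate input, for instance by using the inclusion $B_4 \hookrightarrow B_6$ to send $B_4$ into the infinite group $\mathrm{Mod}(\Sigma_{0,6})/N_6$ and arguing that the image is itself infinite, or by simply invoking Coxeter's classical hyperbolic-geometric calculation in the small-dimensional range. Apart from this one awkward case, the result is an immediate consequence of Theorem~A: the only verifications required are the surjectivity of the capping and puncture-forgetting maps and the preservation of the conjugacy class of half-twists, both of which are routine facts about the Birman exact sequence.
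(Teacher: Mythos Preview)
Your capping map $\varphi:B_n\to\mathrm{Mod}(\Sigma_{0,n+1})$ is \emph{not} surjective: its image is the stabiliser of the extra puncture, which is a proper subgroup of index $n+1$. This is not fatal for $n\ge 5$ odd---a finite-index subgroup of an infinite group is still infinite, so the conclusion survives once you replace ``surjective'' by ``finite index''---but as written the sentence is wrong and the logic (``infinite target forces $[B_n:N]=\infty$'') needs that extra line.

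The real gap is $n=4$. Your first option, pushing $B_4\hookrightarrow B_6$, runs into the same normal-closure mismatch you anticipate and gives no reason why the image in $\mathrm{Mod}(\Sigma_{0,6})/N_6$ should be infinite; your second option, quoting Coxeter, defeats the stated purpose of the corollary (an \emph{independent} proof). The paper closes this case, and simultaneously handles the odd-strand case, by a different mechanism: rather than manufacturing a surjection onto a bigger sphere group, it observes that the specific element
\[
A=(\sigma_1\sigma_2)^6\,\sigma_3\,(\sigma_1\sigma_2)^6\,\sigma_3^{-1}
\]
exhibited in the proof of Theorem~A as having infinite order under $J'$ already lies in $B_4$. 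One then restricts the Jones representation from $B_{2g+2}$ to $B_{2g+1}$ (branching rule) and further to $B_4$, and uses the third isomorphism theorem to get a surjection $B_4/\mathcal{N}'(\sigma_i^m)\twoheadrightarrow B_4/(\mathcal{N}(\sigma_i^m)\cap B_4)\le B_5/\mathcal{N}(\sigma_i^m)$, where the image contains $A$ and is therefore infinite. So the paper's proof leans on the explicit content of Theorem~A (the witness element uses only three generators), whereas your reduction treats Theorem~A as a black box; that is precisely why your argument stalls at $n=4$.
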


The proof of Corollary \ref{coxthem} is independent from Coxeter's proof.

\paragraph{Construction of the Jones representation.} Let $H(q,2g+2)$ be a Hecke algebra with complex parameter $q$. There is a representation $B_{2g+2} \rightarrow H(q,2g+2)$ from the braid group into the group of units of $H(q,2g+2)$. We can think of $H(q,2g+2)$ as a quotient of the group algebra of $B_{2g+2}$ over $\mathbb{Z}[q^{\pm 1}]$. Thus, any representation of $H(q,2g+2)$ will give a representation for $B_{2g+2}$. We can think of $\mathrm{Mod}(\Sigma_{0,2g+2})$ as a quotient group of the braid group $B_{2g+2}$. Jones observed that in some cases we can modify the representations of $H(q,2g+2)$ so that we can define representations for $\mathrm{Mod}(\Sigma_{0,2g+2})$ \cite[Section 10]{JO}.

Assume that $q$ is not a root of unity. The set of irreducible representations of the Hecke algebra $H(q,2g+2)$ is in bijective correspondence with the set of Young diagrams of size $2g+2$. When the Young diagram has the shape of a rectangle, we will show that under a modification, the corresponding irreducible representation of $H(q,2g+2)$ gives a representation of $\mathrm{Mod}(\Sigma_{0,2g+2})$. We will also explain a method for explicitly computing matrices of the irreducible representations of $H(q,2g+2)$ in this case by using the notion of W-graphs (see Section 3.2). If $g=2$ we explicitly calculate the matrices of the representation of $\mathrm{Mod}(\Sigma_{0,2g+2})$. Our calculations are equivalent to those of Jones, but we make different choices of parameters and hence the resulting matrices are slightly different. When $g \geq 3$, the calculations are much more complicated and we will not compute the full matrices explicitly. However, we will show that the matrices have a particular block form for $g\geq 3$ that is sufficient for the calculations we require (see Theorem \ref{main3}).

\paragraph{Outline of the paper.} In Section 2 we give basic background for braid groups, hyperelliptic mapping class groups, and Hecke algebras. In Section 3 we define representations of Hecke algebras, and consequently of braid groups. More particularly, in Section 3.1 we give relations between irreducible representations of Hecke algebras, and Young diagrams. Sections 3.2 and 3.3 are devoted to the construction of $\mathbb{Z}[q^{\pm 1}]$-modules, and an action of Hecke algebras on these modules by using the notion of W-graphs; as a result we end up with well-defined irreducible representations of Hecke algebras. In Section 3.3 we explain how Young diagrams are related to these $\mathbb{Z}[q^{\pm 1}]$-modules. In Section 3.4 we examine certain properties of representations of $B_n$ we obtain via the homomorphism $B_n \rightarrow H(q,n)$. In Section 4 we define the representation of $\mathrm{SMod}(\Sigma_{0,2g+2})$. In Section 5 we prove Theorem A, and Corollary \ref{coxthem}. In Section 6 we prove Theorem B.

\paragraph{Acknowledgements.} The author is grateful to his PhD supervisor Tara Brendle for her guidance throughout this work. In addition the author would like to thank Gwyn Bellamy, Joan Birman and Louis Funar for comments on an earlier draft and Louis Funar for his suggestion of Theorem B. The author would also like to thank Duncan McCoy for helpful conversations.

\section{Preliminaries}

In this section we give basic facts about braid groups, Hecke algebras and the hyperelliptic mapping class group.

\paragraph{The braid group and mapping class group of the punctured sphere.} Consider a disc $D_n$ with $n$ distinct marked points in its interior as indicated in Figure 1. The braid group $B_n$ is defined to be $\mathrm{Mod}(D_n)$. We denote by $p_i$ the marked points of $D_n$, enumerating from left to right. For $i \leq n-1$ we denote by $\sigma_i$ the mapping classes of $D_n$ obtained by interchanging the points $p_i$ and $p_{i+1}$ by rotating clockwise as depicted in Figure 1.
\begin{figure}[h]
\begin{center}
\includegraphics[scale=.2]{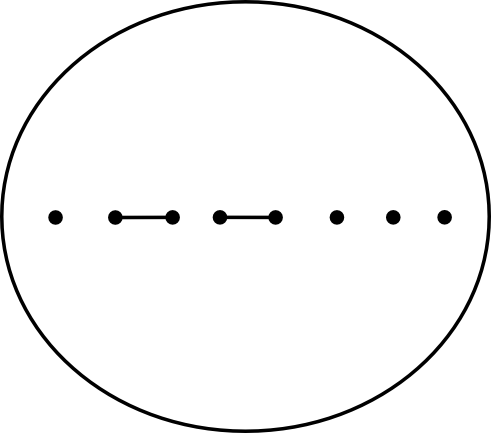}
\includegraphics[scale=.2]{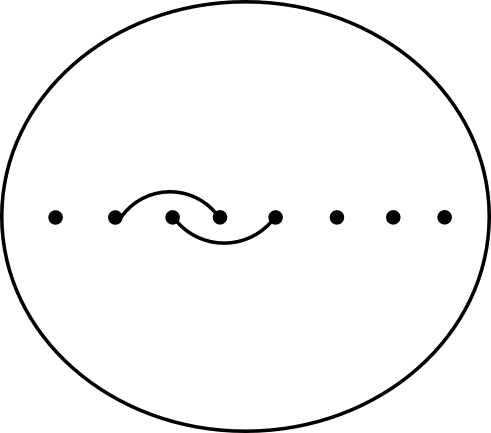}
\end{center}
\label{sigmagen}
\caption{The action of the half-twists $\sigma_3$ on two arcs connecting points $p_2,p_3$ and $p_4,p_5$ respectively.}
\begin{picture}(22,12)

\end{picture}
\end{figure}
The elements $\sigma_i$ are called \emph{half-twists}, and they generate $B_n$. More precisely, we have the following presentation for $B_n$:
\[ \langle \sigma_i,...,\sigma_{n-1} \mid \sigma_i \sigma_{i+1} \sigma_i = \sigma_{i+1} \sigma_i \sigma_{i+1}, \, [\sigma_i, \sigma_j]=1 \, \text{if} \, |i-j|>1 \rangle . \]
For more details about definitions and presentations of $B_n$ see \cite[Chapter 1]{BB}.

Consider the natural epimorphism $B_n \rightarrow S_n$, where $S_n$ is the symmetric group. More precisely, we have $B_n / \langle \sigma^2_i \rangle \cong S_n$. We denote by $s_i$ the transpositions $(i,i+1)$ in $S_n$. We have the following presentation for $S_n$:
\[ \langle s_1,...,s_{n-1} \mid s^2_i=1, s_i s_{i+1} s_i = s_{i+1} s_i s_{i+1}, \, [s_i, s_j]=1 \, \text{if} \, |i-j|>1 \rangle . \]
We denote by $S$ the set $\{s_i\mid 1\leq i\leq n-1\}$; the pair $(S_n,S)$ forms a \emph{Coxeter system} for the symmetric group (see \cite{BB1} for general definition).

Consider the sphere with $n$ punctures $\Sigma_{0,n}$ induced from $D_n$ by gluing a disc to the boundary of $D_n$. By extending every homeomorphism of $D_n$ as the identity in $\Sigma_{0,n}$ we obtain a surjective homomorphism
\[ B_n \rightarrow \mathrm{Mod}(\Sigma_{0,n}). \]
Every half-twist in $B_n$ is mapped into a half-twist in $\mathrm{Mod}(\Sigma_{0,n})$. We will denote the generators of $\mathrm{Mod}(\Sigma_{0,n})$ by $H_i$. Thusly, the homomorphism above is defined by $\sigma_i \mapsto H_i$.

The group $\mathrm{Mod}(\Sigma_{0,n})$ has a presentation with generators $H_1,H_2,...,H_{n-1}$, and relations
$$H_i H_{i+1} H_i = H_{i+1} H_i H_{i+1}$$ $$H_k H_j = H_j H_k, \: \mathrm{if} \: \vert k-j \vert>1$$ $$(H_1 H_2...H_{n-1})^{n}=1$$ and $$H_1 H_2...H^2_{n-1}...H_2 H_1 = 1.$$
where $1 \leq i \leq n-2$ and $1\leq k,j \leq n-1$ \cite[Section 5.1.3]{BFM}

\paragraph{Hyperelliptic mapping class group.} The mapping class group $\mathrm{Mod}(\Sigma_g)$ is generated by Dehn twists about the curves $ c_0,c_1,c_2,...,c_{2g}$ depicted in Figure \ref{Hypmap} \cite{HUS}.

\begin{figure}[h]
\begin{center}
\includegraphics[scale=.3]{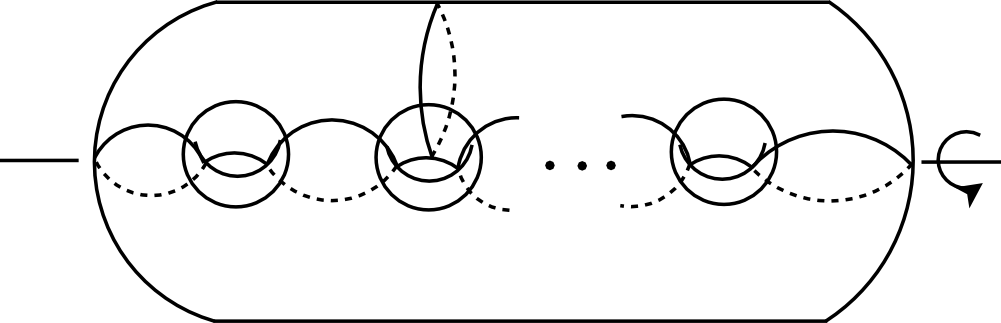}
\end{center}
\caption{Generators for $\mathrm{Mod}(\Sigma_g)$.}
\label{Hypmap}
\begin{picture}(22,12)
\put(125,71){$c_1$}
\put(145,71){$c_2$}
\put(167,71){$c_3$}
\put(190,71){$c_4$}
\put(181,109){$c_0$}
\put(260,71){$c_{2g}$}
\put(280,71){$c_{2g+1}$}
\put(330,78){$\iota$}
\end{picture}
\end{figure}

We think of $\Sigma_g$ as a branched cover of the sphere $\mathbb{S}^2$, over $2g+2$ points as follows:
let $\iota$ be the hyperelliptic involution of the surface $\Sigma_g$ as indicated in Figure \ref{Hypmap}. Obviously $\iota \in \mathrm{Mod}(\Sigma_g)$. Then the quotient $\Sigma_g / \iota$ is an orbifold sphere with $2g+2$ cone points of order 2. In this paper we will not need this geometric information about $\Sigma_g / \iota$, and we will consider it as a topological space $\Sigma_{0,2g+2}$, that is, a sphere with $2g+2$ marked points.

We denote by $\mathrm{SMod}(\Sigma_g)$ the subgroup of $\mathrm{Mod}(\Sigma_g)$ consisting of mapping classes that commute with $\iota$. Birman-Hilden proved the following exact sequence \cite[Theorem 1]{BH}.
\[ 1 \rightarrow \langle \iota \rangle \rightarrow \mathrm{SMod}(\Sigma_g) \rightarrow \mathrm{Mod}(\Sigma_{0,2g+2}) \rightarrow 1 \]
The group $\mathrm{SMod}(\Sigma_g)$ is called \emph{hyperelliptic mapping class group}, and it is generated by $T_{c_1},T_{c_2},...,T_{c_{2g+1}}$. The map $ \mathrm{SMod}(\Sigma_g) \rightarrow \mathrm{Mod}(\Sigma_{0,2g+2})$ is defined by $T_{c_i} \mapsto H_i$.

For $0\leq i \leq 2g+1$ consider the curves $c_i$ indicated in Figure \ref{Hypmap}. Then it is obvious that if $g=2$, then $\mathrm{SMod}(\Sigma_2) = \mathrm{Mod}(\Sigma_2)$. Also, since $T^m_{c_0}$ is not an element of $\mathrm{SMod}(\Sigma_g)$ for any $m \geq 1$, then $\mathrm{SMod}(\Sigma_g)$ has infinite index in $\mathrm{Mod}(\Sigma_g)$.

\paragraph{Hecke Algebras.} Let $\mathbb{Z}[q^{\pm 1}]$ be the ring of Laurent polynomials over $\mathbb{Z}$ where $q$ is an indeterminate. Consider the Coxeter system for the symmetric group $(S_n,S)$ described above, where $S$ consists of the transpositions $s_i = (i,i+1)$. We denote by $H(q,n)$ the algebra over $\mathbb{Z}[q^{\pm 1}]$ generated by the set $\{T_{s_i} \mid s_i \in S \}$ with relations as follows:
\begin{itemize}
	\item[(1)] We have $ T_{s_i} T_{s_{i+1}} T_{s_i} = T_{s_{i+1}} T_{s_i} T_{s_{i+1}}$.
	\item[(2)] If $|i-j|>1$, then $T_{s_i} T_{s_j}=T_{s_j} T_{s_i}$.
	\item[(3)] For all $s_i \in S$, we have $T^2_{s_i} = 1+(q-q^{-1})T_{s_i}$.
\end{itemize}

An algebra $H(q,n)$ of this form is called a \emph{Hecke algebra}. Let $(S_n,S)$ be a Coxeter system. Given $w \in S_n$ we can write $w=s_{j_1} s_{j_2} ... s_{j_p}$, where $s_{j_k} \in S$. If $p$ is minimal, we say that this is a \emph{reduced expression} for $w$; then $l(w)=p$ is called the length of $w$. IF $l(w)=p$ then we define $T_{w} = T_{s_{j_1}} T_{s_{j_2}} ... T_{s_{j_p}}$. The element $T_w$ is independent of the choice of the reduced expression for $w$. It turns out that the multiplication rule is the following, \cite[Lemma 4.4.3]{GP}:
\[
       T_{s_i} T_w = 
  \begin{cases} 
      \hfill  T_{s_i w}, \hfill & \text{ if $l(s_i w) > l(w)$}, \\
      \hfill  T_{s_i w} + (q-q^{-1})T_w, \hfill & \text{ if $l(s_i w) < l(w)$}.\\
  \end{cases}
\]
Furthermore, $H(q,n)$ admits a basis $\{T_w \mid w \in S_n \}$ \cite[Theorem 4.4.6]{GP}.

We can easily check that $T^{-1}_{s_i} = T_{s_1}-(q-q^{-1})$. The map $\psi : B_n \rightarrow H(q,n)$ defined by $\psi( \sigma_i) = T_{s_i}$ is a well defined homomorphism from $B_n$ to the group of units of $H(q,n)$. Similarly if $q=1$ then we have a well defined homomorphism $\phi : S_n \rightarrow H(1,n)$ defined by $\phi(s_i)=T_{s_i}$. We can think of $H(q,n)$ as a quotient of group algebra of $B_n$, and $H(1,n)$ as the group algebra of $S_n$.


\section{Braid group representations}

Our aim in this section is to define representations of $H(q,n)$. Since $B_n \rightarrow H(q,n)$ is a well defined homomorphism, any linear representation of $H(q,n)$ will give a rise to a representation of $B_n$. It is known that the set of irreducible representations of $H(q,n)$ is in bijective correspondence with the set of irreducible representations of $S_n$ \cite[Theorem 8.1.7]{GP}. But irreducible representations of $S_n$ are in bijective correspondence with Young diagrams \cite[Theorem 1]{diaconis}. The construction of the representations of $H(q,n)$ we are going to provide arises from the notion of W-graphs introduced by Kazhdan-Lusztig \cite{KL}. This representation is well defined for any $q$, even for roots of unity. For $i < n$ the generators of $S_n$ and $B_n$ will be denoted by $s_i$ and $\sigma_i$ respectively. As mentioned in Section 2, the generators $s_i$ of $S_n$ correspond to the transpositions $(i,i+1)$.

\subsection{Young diagrams}

A \emph{Young diagram} $\lambda = [\mu_1, \mu_2,...,\mu_k]$ is an array of $n$ boxes with $\mu_i$ boxes in the $i^{th}$ row, $\mu_i \geq \mu_{i+1}$, and $\sum \mu_i=n$. We denote by $\Lambda_n$ the set of all Young diagrams with $n$ boxes, and let $\lambda \in \Lambda_n$. A \emph{standard tableau} of $\lambda$ is obtained by filling the boxes of $\lambda$ with integers between 1 and $n$, such that the integers are strictly increasing from left to right and from top to bottom, and every box contains exactly one number. An example is given in Figure \ref{stand}. We denote by $Y_{\lambda}$ the set of all standard tableaux of $\lambda$. For each $Y_{\lambda}$ there is an irreducible representation of $S_n$. The dimension of the representation of the symmetric group $S_n$ associated to $\lambda \in Y_{\lambda}$ is equal to the dimension of the representation of the Hecke algebra $H(q,n)$ \cite[Section 4]{JO}.
\begin{figure}[h]
\begin{center}
\includegraphics[scale=.4]{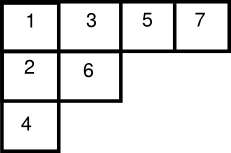}
\end{center}
\caption{Example of a standard tableau.}
\label{stand}
\begin{picture}(22,12)

\end{picture}
\end{figure} 

Let $V_{\lambda}$ be a free $\mathbb{Z}[q^{\pm 1}]$-module with basis $\{ u_t, \mid t \in Y_{\lambda} \}$. The irreducible representation associated to the Young diagram $\lambda$ will be denoted by $\pi_{\lambda}:H(q,n) \rightarrow \mathrm{End}(V_{\lambda})$. We will describe how such an irreducible representation decomposes when it is restricted to $H(q,n-1) \subset H(q,n)$. A \emph{Young's lattice} is a diagram formed by Young diagrams such that each Young diagram is connected by an edge to another one if they differ by one box. Consider for example the Young' lattice indicated in Figure \ref{partition}. 

\begin{figure}[h]
\begin{center}
\includegraphics[scale=.3]{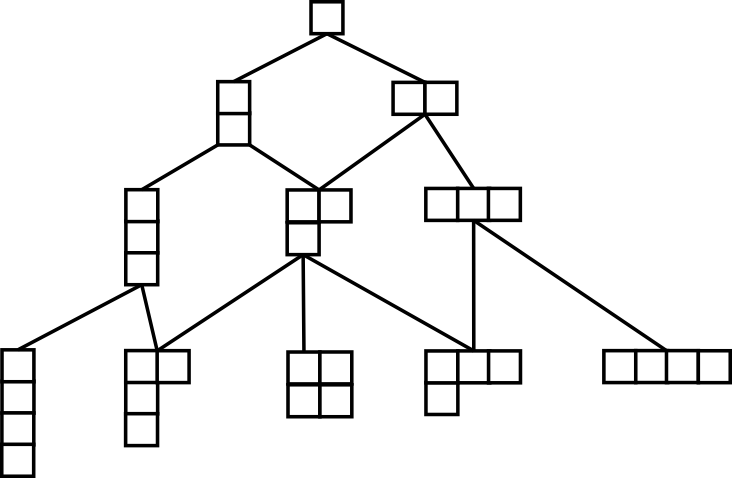}
\end{center}
\caption{Partitions up to 4 boxes.}
\label{partition}
\begin{picture}(22,12)

\end{picture}
\end{figure}
Let $\lambda \in \Lambda_n$ be a diagram which is connected by edges to diagrams $\lambda_1,...,\lambda_m$ such that $\lambda_i \in \Lambda_{n-1}$ for $i =1,2,...,m$. The restriction of the representation $\pi_{\lambda}:H(q,n) \rightarrow \mathrm{End}(V_{\lambda})$ to $H(q,n-1)$ is as follows \cite[Section 4]{JO}:

\[ \bigoplus^{m}_{i=1}\pi_{\lambda_i} :H(q,n-1) \rightarrow  \bigoplus^{m}_{i=1} \mathrm{End}(V_{\lambda_i}).\]

The restriction formula above is called \emph{branching rule}. The dimension of the representation of $H(q,n)$ associated to $\lambda \in \Lambda_n$ is equal to the number of the descending paths from $\Square$ to $\lambda$, which is equal to the cardinality of $Y_{\lambda}$. In order to compute the cardinality of $Y_{\lambda}$ we need the notion of the hook length \cite[Theorem 1]{FRT}. A \emph{hook length} $hook(x)$ of a box $x$ in $\lambda$ is the number of boxes that are in the same row to the right of it plus those boxes in the same column below it, plus one. For example consider the Young diagram of Figure \ref{hook}, where in each box we have assigned its hook length. The cardinality of $Y_{\lambda}$ is equal to

\[ \vert Y_{\lambda} \vert = \frac{n!}{\prod\limits_{x\in \lambda} hook(x)}. \]
For the diagram $\lambda$ of Figure \ref{hook} we have $\vert Y_{\lambda} \vert = 68640$.

\begin{figure}[h]
\begin{center}
\includegraphics[scale=.5]{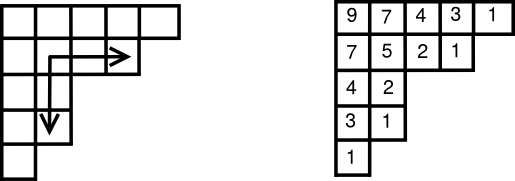}
\end{center}
\caption{The hook length of every box in the Young diagram.}
\label{hook}
\begin{picture}(22,12)

\end{picture}
\end{figure}

\subsection{The Burau representation}

For $n > 2$ the reduced Burau representation $\beta_{t}: B_n \rightarrow \mathrm{GL}_{n-1}(\mathbb{Z}[t^{\pm 1}])$ is determined by the matrices

\[ \beta_{t}(\sigma_1) = \left( \begin{array}{ccc}
-t & 1\\
0 & 1\\
\end{array} \right) \oplus I_{n-3}, \quad \beta_{t}(\sigma_{n-1}) = I_{n-3} \oplus \left( \begin{array}{ccc}
1 & 0\\
t & -t\\
\end{array} \right) \]
and for $1 < i < n-1$

\[ \beta_{t}(\sigma_{i}) = I_{i-2} \oplus \left( \begin{array}{ccc}
1 & 0 & 0\\
t & -t & 1\\
0 & 0 & 1\\
\end{array} \right) \oplus I_{n-i-2}. \]

A direct calculation shows that $(-\beta_t(\sigma_i))^2 = (t-1)(-\beta_t(\sigma_i))+t$. This relation was first observed by Jones. For $q^2=t$, the matrices $(-q^{-1}\beta_{q^2}(\sigma_i))$ satisfy the quadratic relation of $H(q,n)$. Since the Burau representation is irreducible and satisfies the quadratic relation, one can check that $(-q^{-1}\beta_{q^2}(\sigma_i)) = \pi_{\lambda}(\sigma_i)$, where $\lambda$ is the Young diagram {\tiny\yng(5,1)}.

\subsection{W-graphs}

In this section we provide a tool to explicit construct matrices of representations of the Hecke algebra $H(q,n)$. In fact, we will define a $\mathbb{Z}[q^{\pm 1}]$-module $E$, and an action of $H(q,n)$ on $E$. A W-graph encodes the structure of a $\mathbb{Z}[q^{\pm 1}]$-module in the sense that its vertices correspond to a basis for $E$; the edges provide all the information we need for the action of $H(q,n)$ on $E$. First we will give a new basis for $H(q,n)$; as before each basis element is associated to an element of the symmetric group $S_n$. Then we will define an equivalence relation on elements of $S_n$. The equivalence classes will be called cells. Vertices of a W-graph correspond to elements of a fixed cell. We will define an action of generators of $H(q,n)$ on the basis of $E$. This action extends to a representation $H(q,n)\rightarrow \mathrm{End}(E)$ \cite{KL}.

\paragraph{Definition of W-graphs.} Let $(S_n,S)$ be the Coxeter system for the symmetric group introduced in the previous section. A \emph{W-graph} is defined to be a set of vertices $X$ and a set of edges $Y$ together with the following data. For each vertex $x \in X$, we are given a subset $I_x \subset S$, and for each ordered pair of vertices $(y,w)$ with $\{y,w \} \in Y$, we are given an integer $\mu(y,w)$, subject to the requirements in the following paragraph.

Let $E$ be the free $\mathbb{Z}[q^{\pm 1}]$-module with basis associated to the vertex set $X$. Recall that $s_i$ is the transposition $(i,i+1)$ in $S_n$. For any $s_i \in S$, for any $w \in X$ (considering $X$ as a basis for $E$) we define the map $\tau_{s_i}$ as follows:
\[
       \tau_{s_i} w = 
  \begin{cases} 
      \hfill  -q^{-1}w, \hfill & \text{ if $s_i \in I_w$}, \\
      \hfill  qw+\sum \mu(y,w)y, \hfill & \text{ if $s_i \notin I_w$},\\
  \end{cases}
\]
where the sum is taken over all $y \in X,s_i \in I_y$ such that  $\{ y,w\} \in Y$. Extending linearly we get an endomorphism of $E$. For $i<n-2$, we require that
\[ \tau_{s_i} \tau_{s_{i+1}} \tau_{s_i} = \tau_{s_{i+1}} \tau_{s_i} \tau_{s_{i+1}}\]
and for $|i - j|>1$
\[ \tau_{s_i} \tau_{s_{j}} = \tau_{s_{j}} \tau_{s_i}.\]
In other words, we require that the endomorphisms $\tau_{s_i}$ satisfy the usual relations in the braid group.

We note that Kazhdan-Lusztig defined W-graphs for any Coxeter system.

\paragraph{Example 1.} We give an example of a W-graph for the group $S_3$ in Figure \ref{wgraphex}. We label the vertices of the W-graph by the elements of the set $X= \{s_1,s_2s_1\}$. We let $I_{s_1}=\{s_1\}$, $I_{s_2s_1}=\{s_2\}$ and the integers $\mu(s_1,s_2s_1)=1$, $\mu(s_2s_1,s_1)=0$. Furthermore, we have
\begin{eqnarray*}
\begin{tabular}{llll}
$\tau_{s_1}(s_1)$ & = &  $-q^{-1}s_1 $ \cr
$\tau_{s_1}(s_2s_1)$ & = &  $qs_2s_1+s_1 $  \cr
$\tau_{s_2}(s_1)$ & = &  $q s_1+s_2s_1 $  \cr
$\tau_{s_2}(s_2s_1)$ & = &  $-q^{-1} s_2s_1 $  \cr
\end {tabular}
\end {eqnarray*}
The matrices of the representation are as follows:

\[ 
T_{s_1} \mapsto \left( \begin{array}{ccccc}
-q^{-1} & 1\\
0 & q\\ \end{array} \right), \,  T_{s_2} \mapsto \left( \begin{array}{ccccc}
q & 0\\
1 & -q^{-1}\\ \end{array} \right).
\]
We note that in the literature the vertices of W-graphs are often labeled by $I_w$ where $w \in S_n$. 

\begin{figure}[h]
\begin{center}
\includegraphics[scale=.34]{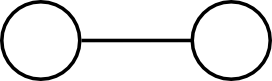}
\end{center}
\caption{Example of a W-graph}
\label{wgraphex}
\label{symbp}
\begin{picture}(22,12)
\put(180,60){$s_1$}
\put(228,60){$s_2s_1$}
\end{picture}
\end{figure}

The endomorphism $\tau_{s_i}$ satisfies the quadratic relation $(\tau_{s_i}+q^{-1})(\tau_{s_i}-q)=0$. To see this consider $w \in S_n$ such that $s_i \in I_w$. Then we have:
$$(\tau_{s_i}+q^{-1})(\tau_{s_i}-q)(w)=(\tau_{s_i}+q^{-1})(-q^{-1}w-qw)=q^{-2}w+w-q^{-2}w-w=0.$$
On the other hand, if $s_i \notin I_w$, then we have:
\begin{eqnarray*}
\begin{tabular}{llll}
$(\tau_{s_i}+q^{-1})(\tau_{s_i}-q)(w)$ & = &  $ (\tau_{s_i}+q^{-1})(qw+ \sum \mu(y,w)y - qw) $ \cr
& = &  $\sum \mu(y,w)\tau_{s_i}(y) + q^{-1}\sum \mu(y,w)y$  \cr
& = &  $-q^{-1}\sum(y,w)y + q^{-1}\sum \mu(y,w)y$  \cr
& = &  $0$  \cr
\end {tabular}
\end {eqnarray*}

By the second condition of the definition of W-graphs, and since the endomorphism $\tau_{s_i}$ satisfies the quadratic relation, the map $T_{s_i} \mapsto \tau_{s_i}$ extends to a representation of the Hecke algebra $H(q,n)$.

\paragraph{The Kazhdan-Lusztig basis.} Let $a \rightarrow \overline{a}$ be the involution on the ring $\mathbb{Z}[q^{\pm 1}]$ defined by $\overline{q} = q^{-1}$. We extend this to an involution on $H(q,n)$ by the formula
\[ \overline{\sum a_w T_w} = \sum \overline{a} T^{-1}_{w^{-1}}. \]
We consider the set of generators $S$ of $S_n$, and the length function $l:S_n \rightarrow \mathbb{Z}$ described in Section 2. We denote by $\leqq$ the Bruhat order on the set of all words in the generating set $S$, that is, $y \leqq x$ if $y$ is subword of $x$.

We have the following theorem of Kazhdan-Lusztig \cite[Theorem 1]{KL} reformulated by Yin \cite[Theorem 1.4]{YI}:

\begin{theorem}
\label{K-L}
For any $w \in S_n$, there is a unique element $C_w \in H(q,n)$ such that $\overline{C_w} = C_w$, where
\[ C_w = \sum_{y \leqq w} (-q)^{l(w)-l(y)} \overline{P_{y,w}(q^2)} T_y \]
and $P_{y,w}$ is a polynomial of degree at most $\frac{1}{2} (l(w)-l(y)-1)$ if $y<w$, and $P_{w,w}=1$.
\end{theorem}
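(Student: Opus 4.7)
The plan is to prove both uniqueness and existence by induction on the length $l(w)$, following the classical Kazhdan--Lusztig argument adapted to the normalization of the theorem.

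For uniqueness, suppose $C_w$ and $C'_w$ both satisfy the conclusion; their difference $D$ is bar-invariant, has zero coefficient at $T_w$, and its coefficient at $T_y$ for each $y<w$ is a polynomial in $q$ all of whose nonzero monomials have exponents in the range $[1,\,l(w)-l(y)]$ (this follows from the prefactor $(-q)^{l(w)-l(y)}\overline{P_{y,w}(q^2)}$ together with the degree bound on $P_{y,w}$). Letting $y_0$ be a maximal element of Bruhat order for which the coefficient $D_{y_0}$ is nonzero, the lower-triangularity of the bar involution on the $T$-basis forces $\overline{D_{y_0}} = D_{y_0}$; but a polynomial supported only in strictly positive powers of $q$ cannot equal its image under $q \mapsto q^{-1}$ unless it vanishes. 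Hence $D = 0$.

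For existence I would induct on $l(w)$, with base case $C_e = T_e = 1$, which is bar-invariant. For the inductive step, pick a simple reflection $s \in S$ with $l(sw) < l(w)$ and set $\tilde{C}_w := (T_s + q^{-1})\,C_{sw}$. The quadratic relation (3) of $H(q,n)$ gives $T_s^{-1} = T_s - (q - q^{-1})$, so that $\overline{T_s + q^{-1}} = (T_s - (q - q^{-1})) + q = T_s + q^{-1}$; thus $T_s + q^{-1}$ is bar-invariant, and consequently so is $\tilde C_w$. Expanding via the multiplication rule for $T_{s_i}T_w$ recalled in Section 2, together with the inductive form of $C_{sw}$ (in particular $P_{sw,sw}=1$), one sees that the coefficient of $T_w$ in $\tilde C_w$ is $1$, matching the required $P_{w,w}=1$.

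The remaining coefficients of $\tilde C_w$ need not yet satisfy the degree bound on $P_{y,w}$. I would correct this by processing $z<w$ in descending Bruhat order and subtracting integer multiples of the previously constructed bar-invariant elements $C_z$ to kill the offending top-degree monomials in the coefficient of $T_z$, thereby preserving bar-invariance at every step. The resulting element is the desired $C_w$, and the polynomials $P_{y,w}$ are read off from its expansion; the integers appearing in the corrections are precisely the $\mu(y,w)$ of the W-graph formalism of Section 3.3. The main obstacle is verifying that at each stage the monomials violating the bound in the coefficient of $T_z$ are genuinely integer multiples of the leading monomial of $C_z$ at $T_z$, so that the correction can be carried out with integer coefficients while maintaining the prescribed parity and degree structure. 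This comes down to a careful tracking through the multiplication rule of how degrees shift when $l(s_i y)<l(y)$ versus $l(s_i y)>l(y)$, combined with the inductive description of $C_{sw}$; once this parity control is established, the uniqueness step closes the argument.
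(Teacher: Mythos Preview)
The paper does not actually prove this theorem: it is quoted from Kazhdan--Lusztig (as reformulated by Yin), and immediately afterward the paper says ``we will not give details of the construction in here,'' pointing instead to the recursive formula in the original source. So there is nothing in the paper to compare against; your outline is essentially a sketch of the classical Kazhdan--Lusztig argument that the paper is citing.

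Your uniqueness argument is fine. In the existence step, however, your seed element $\tilde C_w = (T_s + q^{-1}) C_{sw}$ uses the wrong bar-invariant multiplier for this normalization: one needs $C_s = T_s - q$ (the paper records $C_{s_i} = T_{s_i} - qT_1$). Both $T_s + q^{-1}$ and $T_s - q$ are bar-invariant, but they differ by $q + q^{-1}$, and that matters precisely at the point you flag as ``the main obstacle.'' Concretely, in $(T_s + q^{-1}) C_{sw}$ the coefficient of $T_{sw}$ is $q^{-1}$ (only the leading $T_{sw}$ in $C_{sw}$ contributes, since $T_s T_{sw} = T_w$ and no $T_y$ with $y<sw$ can produce $T_{sw}$ under left multiplication by $T_s$). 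The target coefficient in $C_w$ is $(-q)^{1}\cdot 1 = -q$, and no integer multiple of $C_{sw}$ can move $q^{-1}$ into $q\mathbb{Z}[q]$; hence your ``subtract integer multiples of $C_z$'' scheme already fails at $z = sw$, and the identification of those integers with the $\mu(y,w)$ is off. Replacing $T_s + q^{-1}$ by $C_s = T_s - q$ repairs this: then $C_s C_{sw} = C_w + \sum_{z \prec sw,\, sz<z} \mu(z,sw)\, C_z$, the corrections are genuinely integers (the $\mu$'s), and the rest of your sketch goes through as in the cited source.
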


The polynomial $P_{y,w}$ is known in the literature as the \emph{Kazhdan-Lusztig polynomial}, and Theorem \ref{K-L} proves the existence and uniqueness of $P_{y,w}$. The Kazhdan-Lusztig polynomial is difficult to construct explicitly and we will not give details of the construction in here. A recursive formula for the computation of the Kazhdan-Lusztig polynomial can be found in the original paper \cite[Equation 2.2c]{KL}.

We will show by induction, on $l(w)$, that $T_w$ can be expressed as a linear combination of elements of $\{ C_x \mid x \in S_n \}$. We have that $C_1=T_1$ and $C_{s_i}=T_{s_i}-qT_{1}$, where $1$ stands for the trivial element of $S_n$, and $s_i$ is the transposition $(i,i+1)$ in $S$. For an arbitrary $w \in S_n$ we have
\[ C_w  = (\sum_{y < w} (-q)^{l(w)-l(y)} \overline{P_{y,w}(q^2)} T_y) + T_w. \]
Since $l(y)<l(w)$, by the inductive hypothesis we have that $T_y$ can be expressed as a linear combination of $C_x$, for elements $x \in S_n$ with $l(x)<l(w)$. Furthermore, since the cardinality of $\{ C_x \mid x \in S_n \}$ is equal to the cardinality of $\{ T_x \mid x \in S_n \}$, then the set $\{ C_x \mid x \in S_n \}$ forms a basis for $H(q,n)$.

\paragraph{Construction of W-graphs and cells.} Consider $y,w \in S_n$. We say that $y \prec w$ if $y<w$, if $(-1)^{l(y)}=-(-1)^{l(w)}$, and if the Kazhdan-Lusztig polynomial $P_{y,w}$ has degree exactly $\frac{1}{2} (l(w)-l(y)-1)$. We take $\mu(y,w)$ to be the coefficient of the highest power of $q$ in $P_{y,w}$. Let $\Gamma$ be the graph whose vertices $X$ correspond to the $n!$ elements of $S_n$ and whose edges are subsets of $S_n$  of the form $\{ y,w \}$ with $y \prec w$. We set $I_w = \{ s_i \in S \mid s_iw<w \}$. We define a preorder relation $\leqq_{\Gamma}$ on the set of vertices of $\Gamma$ as follows. Two vertices $x,x'$ satisfy $x \leqq_{\Gamma} x'$ if there exist a sequence of vertices $x=x_0,x_1,...,x_n=x'$ such that for each $i$, $(1\leq i \leq n)$, $\{x_{i-1},x_i \}$ is an edge and $I_{x_{i-1}} \nsubseteq I_{x_i}$. Define the equivalence relation $x \sim_{\Gamma} x'$ if $x \leqq_{\Gamma} x' \leqq_{\Gamma} x$. The equivalence classes of $S_n$ under $\sim_{\Gamma}$, denoted by $[w]$, are called \emph{cells}. We denote by $\Gamma_{[w]}$ the subgraph of $\Gamma$ whose vertices correspond to elements of $[w]$. Now we can define the action of $T_{s_i}$ on $\{C_{w} \mid w \in S_n \}$:

\[ T_{s_i} C_w = q C_w + C_{s_i w} + \sum_{y \prec w, s_i y < y} \mu(y,w) C_y. \]
This action above is well defined \cite[Theorem 2.5]{YI}.

Let $D_{w}$ be the $\mathbb{Z}[q^{\pm 1}]$-module spanned by the set $\{ C_y \mid y \leqq_{\Gamma} w \}$, and let $D'_{w}$ be the $\mathbb{Z}[q^{\pm 1}]$-module spanned by the set $\{ C_y \mid y \leqq_{\Gamma} w, y \notin [w] \}$. It is obvious that $D'_{w}$ is contained in $D_{w}$. We will show that $D_{w}$ and $D'_{w}$ are left-ideals of $H(q,n)$. Recall the formula
\[ T_{s_i} C_w = q C_w + C_{s_i w} + \sum_{y \prec w, s_i y < y} \mu(y,w) C_y. \]
The conditions $y \prec w, s_i y < y$ together with the existence of $\mu(y,w)$ show that $y\leqq_{\Gamma} w $. Furthermore $s_i w \leqq_{\Gamma} w$ since $w \prec s_i w$ (more particularly we have $P_{w,s_i w}=1$ \cite[Lemma 2.6 (iii)]{KL}), and $I_{w} \nsubseteq I_{s_i w}$. Hence we can define the quotient $D_w / D'_w$. We have the following theorem \cite[Theorem 2.6]{YI}:

\begin{theorem}
\label{Wrep}
The graph $\Gamma_{[w]}$ defined above is a W-graph whose associated $\mathbb{Z}[q^{\pm 1}]$-module is $D_{w}/D'_{w}$.
\end{theorem}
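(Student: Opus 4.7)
The plan is to identify a natural basis for the quotient $D_w/D'_w$ and verify, term by term, that the action of $H(q,n)$ on the quotient matches the $\tau_{s_i}$ defined by the W-graph rule applied to $\Gamma_{[w]}$. Since $D'_w$ is spanned by those $C_y$ with $y \leqq_\Gamma w$ but $y \notin [w]$, the cosets $\overline{C_y} := C_y + D'_w$ indexed by $y \in [w]$ form a $\mathbb{Z}[q^{\pm 1}]$-basis of $D_w/D'_w$; I identify each $\overline{C_y}$ with the vertex $y$ of $\Gamma_{[w]}$. Because both $D_w$ and $D'_w$ are left ideals of $H(q,n)$, left multiplication descends to a well-defined action on the quotient, and it is enough to compute the action of each generator $T_{s_i}$ on the basis $\{\overline{C_y}\}_{y\in[w]}$.

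Next I would split into the two cases of the W-graph definition. If $s_i \in I_y$, i.e.\ $s_i y < y$, then the standard Kazhdan--Lusztig computation gives $T_{s_i} C_y = -q^{-1} C_y$ (obtained by applying the quadratic relation $T_{s_i}^2 = 1 + (q-q^{-1})T_{s_i}$ to the known formula $T_{s_i} C_{s_i y} = q C_{s_i y} + C_y + \cdots$ and solving for $T_{s_i} C_y$), which upon passing to the quotient reads $\tau_{s_i}\overline{C_y} = -q^{-1}\overline{C_y}$, matching the first branch of the W-graph rule. If $s_i \notin I_y$, i.e.\ $s_i y > y$, the displayed multiplication formula gives
\[
T_{s_i} C_y \;=\; q C_y + C_{s_i y} + \sum_{x \prec y,\; s_i x < x} \mu(x,y)\, C_x.
\]
Each term on the right lies in $D_w$: one has $y \prec s_i y$ with $P_{y,s_i y}=1$ and $I_y \nsubseteq I_{s_i y}$, so $s_i y \leqq_\Gamma y \leqq_\Gamma w$; similarly each $x$ with $\mu(x,y)\neq 0$ satisfies $x\leqq_\Gamma y \leqq_\Gamma w$ by the edge relation and the assumption $s_i x<x$. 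Reducing modulo $D'_w$ kills exactly those summands whose index lies outside $[w]$, so
\[
\tau_{s_i}\overline{C_y} \;=\; q\overline{C_y} + \sum_{\substack{x \in [w],\ x \prec y \\ s_i \in I_x}} \mu(x,y)\, \overline{C_x},
\]
which is precisely the second branch of the W-graph formula applied to the vertex $y$ with edge set and multiplicities inherited from $\Gamma$.

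Finally, the braid relations $\tau_{s_i}\tau_{s_{i+1}}\tau_{s_i} = \tau_{s_{i+1}}\tau_{s_i}\tau_{s_{i+1}}$ and $\tau_{s_i}\tau_{s_j} = \tau_{s_j}\tau_{s_i}$ for $|i-j|>1$ required by the W-graph axioms are automatic, since they hold for the $T_{s_i}$ in $H(q,n)$ and $D_w/D'_w$ is an $H(q,n)$-module by construction; this simultaneously shows that $\Gamma_{[w]}$ is indeed a W-graph (the nontrivial content of the W-graph axioms being exactly these relations) and that the associated module is $D_w/D'_w$. The main obstacle is the bookkeeping in the second case: one needs the correct interaction of the Bruhat order, the preorder $\leqq_\Gamma$, and the edge relation $\prec$ to guarantee that every summand appearing in $T_{s_i}C_y$ already lies in $D_w$, so that the reduction mod $D'_w$ produces a sum indexed only by vertices of $\Gamma_{[w]}$ satisfying $s_i \in I_x$, exactly as prescribed by the W-graph rule.
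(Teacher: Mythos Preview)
The paper does not supply its own proof of this statement; it is quoted from Yin (ultimately Kazhdan--Lusztig), so there is nothing in the paper to compare against beyond the surrounding definitions and formulas. Your overall plan---take the cosets $\overline{C_y}$ as a basis, compute $T_{s_i}$ in the two cases, and observe that the braid relations are inherited from $H(q,n)$---is the standard one and is sound in outline. There is, however, a concrete slip in the second case. From
\[
T_{s_i} C_y \;=\; q C_y + C_{s_i y} + \sum_{x \prec y,\; s_i x < x} \mu(x,y)\, C_x
\]
you pass to the quotient and display
\[
\tau_{s_i}\overline{C_y} \;=\; q\overline{C_y} + \sum_{\substack{x \in [w],\ x \prec y \\ s_i \in I_x}} \mu(x,y)\, \overline{C_x},
\]
silently discarding $\overline{C_{s_i y}}$. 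But $s_i y$ can lie in the cell: already in the paper's $S_3$ example one has $s_2 s_1\in[s_1]$ and $\tau_{s_2}(s_1)=q\,s_1+s_2 s_1$, so that term must survive. Your displayed formula is therefore wrong in general, and the claim that it ``is precisely the second branch of the W-graph formula'' fails.

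The repair is not purely cosmetic. The W-graph sum ranges over \emph{all} neighbours $x$ of $y$ in $\Gamma_{[w]}$ with $s_i\in I_x$, i.e.\ both $x\prec y$ and $y\prec x$, with $\mu$ extended symmetrically in the latter direction. To match the Kazhdan--Lusztig action you must therefore argue that among $x$ with $y\prec x$ and $s_ix<x$ the \emph{only} contribution is $x=s_iy$, with $\mu(y,s_iy)=1$. This is the genuine content you are missing: it is the Kazhdan--Lusztig lemma that if $y\prec x$, $s_iy>y$, and $s_ix<x$, then necessarily $x=s_iy$ (one way to see it: the recursion $P_{y,x}=P_{s_iy,x}$ in this situation forces $\deg P_{y,x}<\tfrac{1}{2}(l(x)-l(y)-1)$ unless $s_iy=x$). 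Without this step, restoring $\overline{C_{s_iy}}$ still does not make the two sides visibly agree, because the W-graph side could a priori pick up further ``upward'' neighbours.
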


\begin{flushleft}
By the theorem above we have a well defined representation
\end{flushleft}
\[ \rho : H(q,n) \rightarrow \mathrm{End}(D_{w}/D'_{w}) \]
where the elements in the basis of $D_{w}/D'_{w}$ correspond to the elements of the cell $[w]$. Kazhdan-Lusztig proved that the representation of $H(q,n)$ arising from the action on cells is irreducible and that the isomorphism class of the W-graph depends only on $\rho$ and not on $[w]$ \cite[Theorem 1.4]{KL}.

\paragraph{Example of Figure \ref{wgraphex}.} We have already seen that $s_2 s_1 \leqq_{\Gamma} s_1 $. By denoting the trivial element of $S_n$ by 1, we have that $1 \prec s_1$ with $s_1 \leqq_{\Gamma} 1$. Then we have $1 \prec s_1 s_2 s_1$ with $1 \leqq_{\Gamma} s_1 s_2 s_1$. Finally $s_1 s_2 s_1 \leqq_{\Gamma} s_2 s_1$. Hence $s_1 \sim_{\Gamma} s_2 s_1$; the vertex set $\{ s_1, s_2 s_1 \}$ is labeled by $\mu(s_1,s_2 s_1)=1$ \cite[Theorem 2.6]{KL}.

\subsection{Robinson-Schensted correspondence and dual Knuth equivalence}

In this section we will give an algorithm for finding elements of cells defined in the previous section. Unfortunately the construction of cells given by Kazhdan-Lusztig \cite{KL} is not easy when $n$ is large. For that reason we will provide a different strategy to obtain cells. We have two tools: the Robinson-Schensted correspondence (RS-correspondence), and the dual Knuth equivalence. Using the RS-correspondence, we will be able to decide when two elements of $S_n$ belong to the same cell. We will also be able to calculate the cardinality of a cell, that is, the dimension of the representation associated to that cell. By the dual Knuth equivalence, given a element $w \in S_n$ we will be able to obtain all elements of $[w]$. That is, by taking any $w\in S_n$, we will find all $y \in S_n$ such that $w \sim_{\Gamma} y$.

\paragraph{Robinson-Schensted correspondence.} Here we will give an algorithm that associates a Young diagram to a given element of $S_n$. More precisely, in this algorithm every element of $S_n$ corresponds to two standard tableaux of the same shape. For $w \in S_n$, let $w_i$ denote the image of $w(i)$ under the mapping
$$w: \{1,2,...,n \} \rightarrow \{1,2,...,n \} .$$
We will identify $w$ with the sequence $w_1 w_2 ... w_n$. Consider an arbitrary tableaux $T$. We denote the boxes of the i$^{th}$ row by $R_i(T)$. In the first step the row $R_1(T)$ contains a box filled by $w_1$. In the next steps if $w_j$ is greater than every number in $R_i(T)$ then we add a box on the right of all other boxes filled by $w_j$. If $w_j$ is not greater than every number of $R_i(T)$, we consider $k \in R_i(T)$ such that $k$ is the smallest number for which $w_j<k$. We replace $k$ by $w_j$. If $R_{i+1}(T)$ does not exist, we add a box in $R_{i+1}(T)$ filled by $k$. If not, we repeat the same process with $k$ playing the role of $w_j$ in the $R_{i+1}(T)$ row. The algorithm ends when we insert all $w_j$ in the boxes of $T$. The algorithm we described is called \emph{row insertion algorithm}. The standard diagram we obtain is denoted by $P(w)$, and it is called \emph{P-symbol}. We define the \emph{Q-symbol} to be $Q(w)=P(w^{-1})$. For example in $S_3$ consider the element $s_1 s_2 = 231$, where $s_i \in S$ are transpositions. In Figure \ref{symbpp} we compute the P-symbol of $s_1 s_2$ step by step.

\begin{figure}[h]
\begin{center}
\includegraphics[scale=.5]{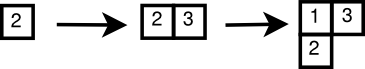}
\end{center}
\caption{The P-symbol of $s_1 s_2$.}
\label{symbpp}
\begin{picture}(22,12)

\end{picture}
\end{figure}

The pair $(P(w),Q(w))$ is called \emph{Robinson-Schensted correspondence}. There is a bijection between the elements of $S_n$ and the pairs $(P(w),Q(w))$ for all $w\in S_n$. For proof of Theorem \ref{RS-eq} see \cite[Theorem A]{AS}. 

\begin{theorem}
For $y,w \in S_n$ we have $y \sim_{\Gamma} w$ if and only if $Q(y) = Q(w)$.
\label{RS-eq}
\end{theorem}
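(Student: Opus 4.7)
The plan is to use the \emph{dual Knuth equivalence} relation $\sim_K$ on $S_n$ as a bridge between $\sim_\Gamma$ and equality of $Q$-symbols. Two permutations $y,w \in S_n$ are dual Knuth equivalent if they are related by a sequence of elementary moves on one-line notation of the form: swap the values $i$ and $i+1$ (respectively $i$ and $i-1$) whenever $i\pm 1$ (respectively $i\mp 1$) occurs strictly between them, when the positions of the three consecutive values $i-1,i,i+1$ exhibit a prescribed pattern. The classical theorem of Knuth asserts that $y\sim_K w$ if and only if $Q(y)=Q(w)$, so it suffices to prove $\sim_K \; =\; \sim_\Gamma$.

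For the containment $\sim_K \;\subseteq\; \sim_\Gamma$, I would verify that each elementary dual Knuth move preserves Kazhdan--Lusztig cells. Concretely, suppose $y$ and $w$ differ by one such move. The first step is to compute their right descent sets $I_y$ and $I_w$ and show that they are comparable in the sense required by the definition of $\leqq_\Gamma$: one containment will hold one way for $(y,w)$ and the reverse for $(w,y)$. The second step is to locate an edge of $\Gamma$ connecting them; this amounts to showing that the Kazhdan--Lusztig polynomial $P_{y,w}$ (or $P_{w,y}$ after left multiplication by a common descent) has degree exactly $\tfrac{1}{2}(l(w)-l(y)-1)$, so that $\mu(y,w)\neq 0$. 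Carrying out these computations uses the recursion for $P_{y,w}$ from \cite{KL} applied to the short-length differences forced by a single dual Knuth move, together with the observation that inserting a common descent on the right (or a common ascent) preserves both sides.

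For the reverse containment I would use a dimension count. By Theorem \ref{Wrep}, each cell $[w]$ is the basis of an irreducible representation of $H(q,n)$, and the Kazhdan--Lusztig decomposition of the regular representation implies that the cells labeled by the Young diagram $\lambda$ number $|Y_\lambda|$ and each has size $|Y_\lambda|$, so that $\sum_\lambda |Y_\lambda|^2 = n!$. On the other hand, the Robinson--Schensted correspondence partitions $S_n$ into fibers of $w\mapsto Q(w)$; there are exactly $|Y_\lambda|$ fibers whose common $Q$-symbol has shape $\lambda$, and each such fiber has cardinality $|Y_\lambda|$ (one element for each possible $P$-symbol of shape $\lambda$). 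Thus the two partitions of $S_n$ have identical multisets of block sizes. Since the previous step shows $\sim_K$ refines $\sim_\Gamma$ and both are partitions with the same block-size data, the refinement must be an equality. This yields both directions simultaneously.

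The main obstacle is the first containment: verifying that a dual Knuth move produces an edge of $\Gamma_{[w]}$ with $\mu(y,w)\neq 0$ and the correct descent set comparison requires delicate control of Kazhdan--Lusztig polynomials of low degree difference, which is exactly the information encoded by the $\mu$-function. Once the edge and descent data are established, the dimension-count argument of the previous paragraph makes the converse essentially automatic, and the theorem follows.
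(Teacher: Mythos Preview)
The paper does not prove this theorem at all: it states the result and cites \cite[Theorem A]{AS}, then moves on. So there is no in-paper argument to compare against; your sketch is essentially the standard route found in the literature (Knuth's characterisation of $Q$-fibers via dual Knuth moves, plus a cell-counting argument), and it is basically sound.

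A few points should be sharpened. First, in the paper's conventions $I_w=\{s_i:s_iw<w\}$ is the \emph{left} descent set, not the right one; the cells in question are left cells. Second, a single dual Knuth move swaps the \emph{values} $i$ and $i+1$, which is left multiplication by the simple reflection $s_i$; hence the two permutations differ in length by exactly~$1$, so $P_{y,w}=1$ and $\mu(y,w)=1$ automatically---no recursion on Kazhdan--Lusztig polynomials is needed to produce the edge. What the dual Knuth hypothesis (that $i-1$ or $i+2$ lies between $i$ and $i+1$) actually guarantees is that the left descent sets are \emph{incomparable}: there is some $s_j\in I_y\setminus I_w$ and some $s_k\in I_w\setminus I_y$. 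Your phrasing ``one containment will hold one way \ldots and the reverse'' reads as the opposite of what is required, since the definition of $\leqq_\Gamma$ demands the \emph{non}-containment $I_{x_{i-1}}\not\subseteq I_{x_i}$ at each step; you need both non-containments simultaneously to get $y\sim_\Gamma w$ from a single edge.

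Your dimension count for the converse is correct and does not require knowing in advance which irreducible labels which cell. You only use that the multiset of cell sizes equals the multiset of $Q$-fiber sizes (both equal $\{|Y_\lambda|$ with multiplicity $|Y_\lambda|:\lambda\vdash n\}$), the former by semisimplicity of $H(q,n)$ for generic $q$ together with the Kazhdan--Lusztig irreducibility statement, the latter by Robinson--Schensted. A refinement between two partitions with the same number of blocks is an equality, which finishes the argument.
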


As we have seen in the previous section Kashdan-Luzstig proved that the action of $H(q,n)$ on cells induces irreducible representations. But the connection of cells and Young diagrams is lacking. The latter connection is summarized in the following theorem \cite[Theorems 6.5.2, 6.5.3]{BB1}. For a fixed $x \in S_n$, we denote the shape of $Q(x)$ by $S(Q(x))$.

\begin{theorem}
\label{connex}
The irreducible representation of $H(q,n)$ associated with the cell $[w]$, for a fixed $w \in S_n$, is labeled by a Young diagram $\lambda$, where $\lambda = S(Q(w))$.
\end{theorem}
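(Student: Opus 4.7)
The plan is to combine a dimension count via the Robinson-Schensted correspondence with an induction on $n$ driven by the branching rule, since cell data alone determines the irreducible representation only up to dimension.

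For the dimension count, Theorem~\ref{RS-eq} identifies the cell $[w]$ with the fibre $\{y \in S_n : Q(y) = Q(w)\}$ of the $Q$-symbol map. Because the RS correspondence $w \mapsto (P(w), Q(w))$ is a bijection onto pairs of standard tableaux of a common shape, this fibre has cardinality equal to the number of standard tableaux of shape $\lambda := S(Q(w))$, namely $|Y_\lambda|$. By Theorem~\ref{Wrep} the cell representation $\rho_{[w]}$ therefore satisfies $\dim \rho_{[w]} = |Y_\lambda| = \dim \pi_\lambda$.

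Since distinct partitions of $n$ may have the same number of standard tableaux, dimension is not enough to pin down the diagram, and I would proceed by induction on $n$. The base $n \leq 2$ is direct. For the inductive step, restrict $\rho_{[w]}$ along the inclusion $H(q,n-1) \hookrightarrow H(q,n)$ generated by $T_{s_1}, \ldots, T_{s_{n-2}}$. By general Kazhdan-Lusztig theory the restriction decomposes as a direct sum of cell representations $\rho_{[y_j]}$ for certain cells in $S_{n-1}$, each of which, by the inductive hypothesis, is the irreducible labeled by the Young diagram $S(Q(y_j))$. The combinatorial heart of the argument is then to check that the shapes $S(Q(y_j))$ appearing in this decomposition are exactly the diagrams obtained from $\lambda$ by removing a single corner box: this is precisely the list predicted by the branching rule for $\pi_\lambda$ recalled in Section 3.1. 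Because a partition is determined (inductively) by the multiset of its corner-deletions, matching the branching rule forces $\rho_{[w]} \cong \pi_\lambda$.

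The main obstacle is the combinatorial step of matching the decomposition of the restriction with the operation of removing the box containing $n$ from the $Q$-symbol. Concretely, one needs: if $y \in [w]$ and $y' \in S_{n-1}$ is the permutation obtained by the natural restriction procedure (deleting $n$ from the one-line notation), then $Q(y')$ is obtained from $Q(y)$ by deleting the box containing $n$. This is a standard but nontrivial property of the RS algorithm, closely tied to the dual Knuth equivalence introduced in Section~3.4. For a self-contained route one tracks the row-insertion algorithm and verifies that sliding out the box containing $n$ in $Q(w)$ corresponds to a dual Knuth move; alternatively one may cite the treatment of Kazhdan-Lusztig cells in type $A$ in Geck-Pfeiffer~\cite{GP}.
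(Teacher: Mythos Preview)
The paper does not prove this theorem at all: it is simply quoted from the literature with the citation \cite[Theorems 6.5.2, 6.5.3]{BB1}. So there is no ``paper's own proof'' to compare against, and your task would be to supply a proof where the paper gives none.

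Your sketch has the right shape but contains a genuine gap at the inductive step. You write that ``by general Kazhdan--Lusztig theory the restriction decomposes as a direct sum of cell representations $\rho_{[y_j]}$ for certain cells in $S_{n-1}$.'' This is not a formal consequence of anything set up in the paper, and it is in fact the crux of the matter. The cell module is defined as a subquotient $D_w/D'_w$ of $H(q,n)$ itself; when you restrict to $H(q,n-1)$ there is no a priori reason this subquotient should admit a filtration by $H(q,n-1)$-cell modules, let alone split as a direct sum of them. Establishing this compatibility between Kazhdan--Lusztig cells and parabolic restriction is exactly what requires work (in type $A$ it ultimately rests on the combinatorics you allude to --- that RS insertion is compatible with deleting $n$ --- together with results of Barbasch--Vogan or the treatment in Geck--Pfeiffer), and you cannot invoke it as ``general theory'' without either proving it or citing it precisely.

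Your dimension count via Theorem~\ref{RS-eq} is correct, and your observation that a partition is determined by its multiset of corner-deletions is also correct, so the logical architecture of the induction is sound. But as written the argument assumes the hard step rather than proving it. If you want a self-contained route, the cleanest is probably to bypass restriction entirely and instead identify the cell module with the Specht module directly, using the explicit bijection between standard tableaux and cell elements furnished by the RS correspondence and checking the $H(q,n)$-action; this is closer to how the cited reference proceeds.
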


\paragraph{Dual Knuth equivalence.} Here we provide an algorithm to find all elements of a cell. That is, starting with an element of $S_n$, we have a process that will allow us to obtain elements of the cell associated to a given element of $S_n$ \cite[Appendix A3.6]{BB1}.

For $x,y \in S_n$, we write $x \sim_{dK} y$ ($x$ is dual Knuth equivalent to $y$) if $x$ and $y$ differ by transposition of two values $i$ and $i+1$, and either $i-1$ or $i+2$ occurs in a position between those of $i$ and $i+1$. For example
\[ 215436 \sim_{dK} 315426 \sim_{dK} 415326 \sim_{dK} 425316  \]
shows that $215436 \sim_{dK} 425316$. We have the next Theorem \cite[Fact A3.6.2 ]{BB1}.

\begin{theorem}
For $x,y \in S_n$, we have that $Q(x) = Q(y)$ if and only if $x \sim_{dK} y$.
\end{theorem}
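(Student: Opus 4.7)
The plan is to reduce the statement to Knuth's classical theorem on $P$-symbols using the definitional identity $Q(w) = P(w^{-1})$ established earlier. Indeed, $Q(x) = Q(y)$ is the same as $P(x^{-1}) = P(y^{-1})$, so it suffices to prove Knuth's theorem: two one-line words $u,v$ satisfy $P(u) = P(v)$ if and only if $u$ is Knuth-equivalent to $v$, where the elementary Knuth moves on words are $bac \leftrightarrow bca$ for $a<b\leq c$ and $acb \leftrightarrow cab$ for $a\leq b<c$. I would then check directly that applying a dual Knuth move to $x$ corresponds, under $w \mapsto w^{-1}$, to an elementary Knuth move on $x^{-1}$: the condition that $i-1$ (resp.\ $i+2$) lies between the values $i$ and $i+1$ in $x$ translates, on the inverse side, into the precise pattern condition that two adjacent letters of $x^{-1}$ flanked by a third letter form the configuration $bac$, $bca$, $acb$, or $cab$ above. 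This reduces the theorem to Knuth's statement.

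For the forward direction of Knuth's theorem I would verify by a short finite case analysis that performing an elementary Knuth move on a word leaves the output of the row-insertion algorithm unchanged: one checks that the two bumping sequences triggered by the two sides of the move produce the same intermediate columns and hence the same final tableau. This is routine.

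For the reverse direction, which is the main obstacle, I would use the \emph{row reading word} of a standard Young tableau $T$, defined by concatenating the rows of $T$ read left-to-right, from the bottom row upward. The key lemma is that every word $u$ is Knuth-equivalent to the row reading word of $P(u)$. I would prove this by induction on the length of $u$: writing $u = u' \cdot a$, the inductive hypothesis replaces $u'$ by the row reading word of $P(u')$, and then a sequence of Knuth moves is used to transport the element bumped out at each stage of the insertion of $a$ down to the correct final row while keeping the prefix in row-reading form. Since the row reading word is a canonical representative of each fiber of $P$, any two words with the same $P$-symbol are Knuth-equivalent to the same word and hence to each other. Combining this with the reduction via inversion gives the theorem; the only substantive work is the inductive construction of the Knuth chain to the canonical representative.
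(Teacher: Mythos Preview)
The paper does not give its own proof of this statement: it is quoted as \cite[Fact A3.6.2]{BB1} and simply invoked as a known fact. So there is nothing in the paper to compare your argument against.

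That said, your outline is correct and is the standard route to this result. The reduction via $Q(w)=P(w^{-1})$ is exactly right, and your check that a dual Knuth move on $x$ becomes an ordinary Knuth move on $x^{-1}$ is valid: swapping the values $i$ and $i+1$ in $x$ amounts to swapping the letters in positions $i$ and $i+1$ of $x^{-1}$, and the condition that $i-1$ (respectively $i+2$) sits positionally between $i$ and $i+1$ in $x$ says precisely that the letter in position $i-1$ (respectively $i+2$) of $x^{-1}$ lies numerically between the two letters being swapped, which is the Knuth condition. The forward direction of Knuth's theorem (invariance of $P$ under elementary Knuth moves) is a routine insertion computation as you say. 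For the reverse direction your use of the row reading word as a canonical Knuth representative is the standard argument; the induction you describe works, though the ``transport the bumped element down by Knuth moves'' step deserves to be written out carefully, since one must track how the bumping path interacts with the row-reading order. None of this is a gap, just a place where the write-up needs care.
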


The algorithm for construction of a cell is divided into 2 steps.

\begin{itemize}
\item[1.] Fix an element $w$ of $S_n$. By Theorem \ref{connex} we obtain the cardinality of $[w]$.
\item[2.] By the dual Knuth equivalence, we obtain all elements of $[w]$. For any $x \in [w]$ we deduce $I_x=\{ s_i \in S \mid s_ix<x \}$.
\end{itemize}

In the previous section we defined a class of $\mathbb{Z}[q^{\pm 1}]$-modules and an action of the Hecke algebras on these modules. Since the construction of these modules by definition is difficult, we can use the algorithm described above to construct them.

\subsection{Facts about the Hecke algebra representation}
We fix an element $w\in S_n$. Let $ H(q,n) \rightarrow \mathrm{End}(V)$ be a Hecke algebra representation where $V$ is the $\mathbb{Z}[q^{\pm 1}]$-module spanned by $C_x$ for all $x \in [w]$ as described in Section 3.2. Since $H(q,n)$ is a quotient of the group algebra of $B_n$, there is a well defined representation $\pi_{\lambda} :B_n \rightarrow \mathrm{End}(V)$. Furthermore, $\lambda$ is the shape of the standard diagram of $Q(w)$.

\paragraph{Representations when the Young diagram is rectangular.} We now focus on representations of $B_{2g+2}$ when $g \geq 2$ and $\lambda$ is a rectangular diagram. For $g\geq 2$ we want to construct a representation $\pi_{\lambda}: B_{2g+2} \rightarrow \mathrm{End}(V_{\lambda})$ such that the matrices in the image of $\pi_{\lambda}$ follow a pattern when the $g$ increases. First we want to find cells that correspond to rectangular diagrams. There is a difficulty here. Consider for example $w \in S_6$ such that $[w]$ is a cell associated to a rectangular diagram. Furthermore $w$ is an element of $S_8$. But by the RS-correspondence we can see that $[w]$ in $S_8$ does not always correspond to a rectangular diagram.\\

We can solve this problem by making good choices for cells. We denote the generators of $S_{2g+2}$ by the transpositions $s_i$, $i <2g+2$, and we consider the element $s_1 s_3 s_5 ... s_{2g+1} \in S_{2g+2}$. Recall from Section 3.3 that for $x \in S_n$, $Q(x)$ is the Q-symbol. By the RS-correspondence it is easy to check that the shape of $Q(s_1 s_3 s_5 ... s_{2g+1})$, denoted by $\lambda_{2g+2}$, is rectangle as indicated in Figure \ref{rect} for $g=5$.
\begin{figure}[h]
\begin{center}
\includegraphics[scale=.5]{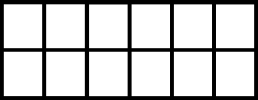}
\end{center}
\caption{The Young diagram of the cell $[s_1 s_3 s_5 ... s_{11}]$.}
\label{rect}
\begin{picture}(22,12)

\end{picture}
\end{figure}

For $g\geq2$, we consider the cells $[s_1 s_3 s_5] = W_1$, $[s_1 s_3 s_5 ... s_{2g+1}] = W_2$. The cell $[s_1 s_3 s_5]$ contains the elements 
\[\{ s_1 s_3 s_5, s_1 s_4 s_3 s_5, s_2 s_1 s_3 s_5, s_2 s_1 s_4 s_3s_5,s_3s_2s_1s_4s_3s_5 \}.\]
For $g=3$ the cell $[s_1 s_3 s_5s_7]$ contains the elements
\[\{s_1s_3s_5s_7,s_1s_4s_3s_5s_7,s_2s_1s_3s_5s_7,s_2s_1s_4s_3s_5s_7,s_3s_2s_1s_4s_3s_5s_7,\]
\[s_1s_3s_6s_5s_7,s_1s_5s_4s_3s_6s_5s_7,s_2s_1s_4s_3s_6s_5s_7,s_2s_1s_5s_4s_3s_6s_5s_7,,s_2s_1s_3s_6s_5s_7\]
\[s_1s_4s_3s_6s_5s_7,s_3s_2s_1s_4s_3s_6s_5s_7,s_3s_2s_1s_5s_4s_3s_6s_5s_7,s_4s_3s_2s_1s_5s_4s_3s_6s_5s_7\}. \]
The first five elements of $[s_1 s_3 s_5 s_7]$ above differ from the elements of $[s_1 s_3 s_5]$ by the generator $s_7$. For $g>3$ the first five elements of $[s_1 s_3 s_5 ... s_{2g+1}]$ differ from $[s_1 s_3 s_5]$ by the word $s_7 s_9 ... s_{2g+1}$.\\

Recall from Section 2 the correspondence $\sigma_i \mapsto T_{s_i}$ where $T_{s_i}$ is a generator of $H(q,n)$. We have the following theorem.

\begin{theorem}
\label{main3}
The map $C_w \mapsto C_{w s_7s_9...s_{2g+1}}$ for $w\in W_1$, and $w s_7s_9...s_{2g+1} \in W_2$ defines an embedding for $H(q,6)$-modules $V_{\lambda_6} \hookrightarrow V_{\lambda_{2g+2}}|_{H(q,6)}$.
\end{theorem}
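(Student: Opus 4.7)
The map $\phi \colon C_w \mapsto C_{w s_7 s_9 \cdots s_{2g+1}}$ sends distinct basis vectors of $V_{\lambda_6}$ to distinct basis vectors of $V_{\lambda_{2g+2}}$, so injectivity is automatic. The content of the theorem is therefore: (i) $\phi$ is well-defined, i.e.\ $w s_7 \cdots s_{2g+1} \in W_2$ for every $w \in W_1$; and (ii) $\phi$ intertwines the $H(q,6)$-action, which, by reduction to the generators, amounts to verifying
\[ T_{s_i}\, \phi(C_w) = \phi(T_{s_i} C_w) \qquad \text{for } 1 \leq i \leq 5,\ w \in W_1. \]

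For (i) I would appeal to the dual Knuth equivalence criterion of Theorem \ref{RS-eq}. Because $s_7, s_9, \ldots, s_{2g+1}$ pairwise commute and each commutes with $s_1, \ldots, s_5$, the word $w s_7 s_9 \cdots s_{2g+1}$ is reduced for any reduced expression of $w \in S_6$, and in one-line notation it agrees with $s_1 s_3 s_5 s_7 \cdots s_{2g+1} = (2,1,4,3,6,5,8,7,\ldots,2g{+}2,2g{+}1)$ in positions $7$ through $2g+2$. Any sequence of dual Knuth moves realizing $w \sim_{dK} s_1 s_3 s_5$ in $S_6$ involves only values in $\{1,\ldots,6\}$ and therefore remains a valid sequence in $S_{2g+2}$ after appending the tail. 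Hence $w s_7 \cdots s_{2g+1} \sim_{dK} s_1 s_3 s_5 s_7 \cdots s_{2g+1}$ and lies in $W_2$.

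For (ii), fix $w \in W_1$, $i \in \{1,\ldots,5\}$, and set $w' = w s_7 \cdots s_{2g+1}$. The commutation $s_i s_j = s_j s_i$ for $j \geq 7$ gives $s_i w' = (s_i w)\, s_7 \cdots s_{2g+1}$ and $l(s_i w') - l(w') = l(s_i w) - l(w)$, so the left descent of $s_i$ is the same at $w$ and $w'$. In the descent case $s_i w' < w'$ both sides equal $-q^{-1}\phi(C_w)$ via the quadratic relation and there is nothing to check. When $s_i w' > w'$, the W-graph action formula
\[ T_{s_i} C_{w'} = q\, C_{w'} + C_{s_i w'} + \sum_{\substack{y \prec w' \\ s_i y < y}} \mu(y, w')\, C_y \]
shows that the $qC_{w'}$ and $C_{s_i w'}$ terms match, under $\phi$, the analogous terms in $T_{s_i} C_w$. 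Matching the $\mu$-sum then reduces to two claims: (a) for $y' \in W_1$ with $y' \prec w$ and $s_i y' < y'$, one has $\mu(y' s_7 \cdots s_{2g+1},\, w s_7 \cdots s_{2g+1}) = \mu(y', w)$; and (b) no $y \in W_2 \setminus \phi(W_1)$ contributes a nonzero coefficient.

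Both (a) and (b) rest on the Kazhdan--Lusztig polynomial identity
\[ P_{y' u,\, w u} = P_{y',\, w}, \qquad u := s_7 s_9 \cdots s_{2g+1}, \]
for $y', w \in S_6$. I would prove this by iterating a stability lemma: if $s$ is a simple reflection commuting with every generator appearing in reduced expressions for $y'$ and $w$, and both $y's$ and $ws$ are reduced, then $P_{y's, ws} = P_{y', w}$. The lemma follows from the recursive defining relation for KL polynomials in Kazhdan--Lusztig's original paper, since the bar involution, the Bruhat interval $[y',w]$, and the length parity all shift uniformly under right-multiplication by such $s$; applying it successively with $s = s_7, s_9, \ldots, s_{2g+1}$ yields the identity. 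For (b), any $y \in W_2$ with $y \leq w'$ in Bruhat order must have reduced expressions using only generators of $w'$, namely $\{s_1,\ldots,s_5, s_7, s_9, \ldots, s_{2g+1}\}$; combined with the identity above and the parity condition $(-1)^{l(y)} = -(-1)^{l(w')}$ built into $\prec$, this forces every contributing $y$ to lie in $\phi(W_1)$. The main obstacle is the stability lemma for KL polynomials; all remaining steps are bookkeeping on reduced expressions, length functions, and one-line notations.
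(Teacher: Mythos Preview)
Your overall strategy is correct and in one respect cleaner than the paper's, but the two differ substantially in how the $\mu$-coefficients are matched, and your justification of (b) has a gap.

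\textbf{Comparison with the paper.} The paper proves your claim (a) by explicit case analysis: for pairs $(u_k,u_i)$ with $l(u_i)-l(u_k)\leq 2$ one has $\mu=1$ by a general lemma of Kazhdan--Lusztig, but the remaining pair $(u_1,u_5)$ forces a recursive computation of $P_{w_1,w_5}$ using the KL recursion and positivity of KL coefficients. Your stability lemma $P_{y'u,\,wu}=P_{y',w}$ (really the factorisation of KL polynomials in the direct product parabolic $\langle s_1,\dots,s_5\rangle\times\langle s_7,s_9,\dots,s_{2g+1}\rangle\subset S_{2g+2}$) handles all pairs at once and bypasses that computation entirely; this is a genuine simplification. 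For (b) the paper argues directly via RSK that any $y\leq w_i$ lying in $W_2$ is already one of $w_1,\dots,w_5$.

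\textbf{The gap in (b).} Parity together with the stability identity does \emph{not} force $y\in\phi(W_1)$. From $y\leq w'=wu$ and the subword property you only get $y=y'v$ with $y'\in\langle s_1,\dots,s_5\rangle$, $y'\leq w$, and $v$ a subproduct of $u=s_7s_9\cdots s_{2g+1}$. The factorisation $P_{y'v,\,wu}=P_{y',w}\,P_{v,u}=P_{y',w}$ (since every KL polynomial in $(S_2)^{g-2}$ equals $1$) gives
\[
\deg P_{y,w'}=\deg P_{y',w}\leq \tfrac{1}{2}\bigl(l(w)-l(y')-1\bigr),
\]
while $y\prec w'$ demands $\deg P_{y,w'}=\tfrac{1}{2}\bigl(l(w)-l(y')-1\bigr)+\tfrac{1}{2}\bigl(l(u)-l(v)\bigr)$. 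Hence $v=u$; parity alone would allow $l(u)-l(v)$ to be any even integer. You then still need to show that $y'u\in W_2$ forces $y'\in W_1$. This is the converse of your (i) and follows from the same RSK reasoning (the $Q$-symbol of $y'u$, restricted to the boxes labelled $1,\dots,6$, is exactly $Q(y')$), but it is not ``bookkeeping'' from the stability lemma and deserves a sentence.
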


In other words we get a representation $B_{2g+2} \rightarrow \mathrm{End}(V_{\lambda_{2g+2}})$, such that
\[ \sigma_i \mapsto \pi_{\lambda_{2g+2}}(\sigma_i) =  \left( \begin{array}{ccc}
\pi_{\lambda_{6}}(\sigma_i) & A \\
0 & B \end{array} \right)\]
where $\pi_{\lambda_{6}}(\sigma_i)$ is a $5 \times 5$ matrix, $d$ is the dimension of $\pi_{\lambda_{2g+2}}(\sigma_i)$, $A$ is a matrix of dimension $5 \times (d -5)$, and $B$ is a matrix of dimension $(d-5) \times (d-5) $.

\begin{proof}
We denote the elements of $W_1$ by $u_1,u_2,u_3,u_4,u_5$. Then the first five elements of $W_2$ have the form $u_i s_7 s_9 ... s_{2g+1} = w_i$, where $i \leq 5$. Recall from Section 3.2 that the action of $T_{s_j}$ on $C_{w_i}$ is defined as follows:
\[ T_{s_j} C_{w_i} = q C_{w_i} + C_{s_j w_i} + \sum_{y \prec w_i, s_j y < y} \mu(y,w_i) C_{y}. \]
We divide the proof into two steps. In the first step we show that the basis elements $C_{x}$, $x \in W_2$ in the above sum are the same for any $g \geq2$. In the second step we show that $\mu(y',w_i)=\mu(y,u_i)$ for $y' \prec w_i, s_j y' < y'$, $y \prec u_i, s_j y < y$, and $y,y'$ differ by the word $s_7s_9...s_{2g+1}$. 

\paragraph{Step 1.} By Theorem \ref{Wrep} if $s_j w_i<w_i$, then $T_{s_j} C_{w_i} = -q^{-1}C_{w_i}$; if $s_j w_i>w_i$ then the element $C_{s_j w_i}$ vanishes in $V_{\lambda_{2g+2}}$ (the $\mathbb{Z}[q^{\pm 1}]$-module spanned by $C_x$ for all $x \in W_2$).

We will show that if $y \prec w_i, s_j y < y$, then $y$ is one of the first five elements in $W_2$. By considering a weaker restriction $y\leqq w_i$, we will show that either $y \notin W_2$ or $y$ is one of $w_i$. We can see that the argument of the first step is true for $g=2$, and 3 (see the examples of cells above). If $g\geq4$ we note that $u_i$ differ from $w_i$ by the word $s_7s_9...s_{2g+1}$, and every $u_i$ is a word in $\{ s_j \mid j \leq5 \}$. Therefore, by the RS-correspondence for $y \in S_{2g+2}$ such that $y \leqq w_i$ we have that either $y \notin W_2$ or $y$ is one of $w_i$.

\paragraph{Step 2.} If $k = 1$, and $i = 5$ do not occur simultaneously, then $\mu(w_k,w_i)=\mu(u_k,u_i)=1$ since $l(w_i)-l(w_k) \leq 2$ \cite[Theorem 2.6]{KL}. Where $l:S_{2g+2} \rightarrow \mathbb{Z}$ is the length function defined in Section 2. It remains to prove that $\mu(w_1,w_5)=\mu(u_1,u_5)=1$. We will compute the polynomial $P_{w_1,w_5}$. We know that $deg(P_{w_1,w_5})=\frac{l(w_2)-l(w_1)-1}{2}=1$, hence $P_{w_1,w_2}=\mu(w_1,w_2)q+1$. By the recursive formula given by Kazhdan-Lusztig \cite[2.2c]{KL} we have
\[ P_{w_1,w_5}= P_{s_3 w_1,s_3 w_5} + qP_{w_1,s_3 w_5} - \sum \mu(z,s_3 w_5)q P_{w_1,z} \]
where the sum is over all $z$ such that $w_1 \leqq z \prec s_3 w_5, s_3 z < z$. But since we have $P_{w_1,s_3 w_5}=\mu(z,s_3 w_5)= P_{w_1,z}=1$, \cite[theorem 2.6]{KL} we conclude
\[ P_{w_1,w_5}= P_{s_3 w_1,s_3 w_5} + q - \sum q . \]
Furthermore, by the RS-correspondence we can easily check that the elements $s_3 w_1$ and $s_3 w_5$ are not equivalent, which implies that the degree of $P_{s_3 w_1,s_3 w_5}$ is strictly less than 1. Hence $P_{s_3 w_1,s_3 w_5}=1$. Finally, the kazhdan-lusztig polynomials have nonnegative integer coefficients \cite[Corollary 1.2]{KL1}. Therefore $P_{w_1,w_5}=q+1$, and we have deduced that $\mu(w_1,w_5)=\mu(u_1,u_5)=1$.
\end{proof}


So far we have constructed irreducible representations of the braid groups $B_n$ that factor through Hecke algebras $H(q,n)$. Theorem \ref{main3} is the key that we need to prove the Theorem A in Section 5. 

\section{Representation for the mapping class group a punctured sphere}


For $i=1,2,...,2g+1$, let $\sigma_i,H_i$ be the generators of $B_{2g+2}$, $\mathrm{Mod}(\Sigma_{0,2g+2})$ respectively as in Section 2. Since the homomorphism $B_{2g+2} \rightarrow \mathrm{Mod}(\Sigma_{0,2g+2})$ is surjective, it is reasonable to ask whether we can define a linear representation of $\mathrm{Mod}(\Sigma_{0,2g+2})$ via $H(q,2g+2)$. An affirmative answer was given by Jones \cite[Theorem 10.2]{JO}. Since we use a different definition of $H(q,n)$ than Jones, we will reformulate his representation. But first we need to prove two lemmas.

We denote by $f_i$ the image of $\sigma_i$ in $\mathrm{End}(V)$, and for $q^2\neq -1$ we set $e_i = (q-f_i)/(q+q^{-1})$. In this Section we examine certain properties of the representation of the braid group $\pi_{\lambda} :B_n \rightarrow \mathrm{End}(V)$. Firstly, we will calculate the image of the center of $B_n$ under the map  $\pi_{\lambda}$ (Lemma \ref{LongEl}).

\begin{lemma}
We have $e^2_i=e_i$. The rank of the idempotent $e_i$ is the number of descending paths from diagram $\lambda_0=$ {\tiny\yng(1,1)} to diagram $\lambda$ of Young's lattice described in Section 3.1.
\label{jone1}
\end{lemma}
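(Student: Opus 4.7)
The plan is to first verify that $e_i$ is idempotent by a direct algebraic manipulation, then compute its rank by reducing to the case $i=1$ and applying the branching rule iteratively down to $H(q,2)$.

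For idempotency, since $f_i = \pi_\lambda(\sigma_i)$ is the image of $T_{s_i}$, it satisfies the Hecke quadratic relation $f_i^2 = 1 + (q-q^{-1})f_i$, equivalently $(f_i - q)(f_i + q^{-1}) = 0$. Expanding $e_i^2 = (q - f_i)^2/(q + q^{-1})^2$ and substituting for $f_i^2$, the numerator collapses to $(q + q^{-1})(q - f_i)$ via the identity $q(q+q^{-1}) = q^2 + 1$, so $e_i^2 = e_i$. In particular $e_i$ is the spectral projection onto the $(-q^{-1})$-eigenspace of $f_i$ along the $q$-eigenspace, and its rank equals $\dim \ker(f_i + q^{-1})$.

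Next I would reduce to the case $i=1$. From the braid relation $f_i f_{i+1} f_i = f_{i+1} f_i f_{i+1}$ a direct rearrangement gives $f_{i+1} = (f_i f_{i+1})\, f_i\, (f_i f_{i+1})^{-1}$ (each $f_i$ is invertible because its eigenvalues $q, -q^{-1}$ are nonzero), so all the $f_i$ are pairwise conjugate in $\mathrm{End}(V)$, their $(-q^{-1})$-eigenspaces are isomorphic, and $\mathrm{rank}\, e_i = \mathrm{rank}\, e_1$ for every $i$.

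Finally, to compute $\mathrm{rank}\, e_1$ I would iterate the branching rule of Section 3.1. Restricting $V_\lambda$ along the chain $H(q,n) \supset H(q,n-1) \supset \cdots \supset H(q,2)$, where $H(q,k) = \langle T_{s_1},\ldots,T_{s_{k-1}} \rangle$, each step decomposes a summand by removing one box from the associated Young diagram. Iterating to the bottom presents $V_\lambda$, as an $H(q,2)$-module, as a direct sum of one-dimensional summands in bijection with chains $\lambda = \lambda_n \supset \lambda_{n-1} \supset \cdots \supset \lambda_2$ in Young's lattice. There are only two possibilities for $\lambda_2 \in \Lambda_2$: the row {\tiny\yng(2)}, on which $T_{s_1}$ acts by $q$ (so $e_1$ acts by $0$), and the column {\tiny\yng(1,1)}, on which $T_{s_1}$ acts by $-q^{-1}$ (so $e_1$ acts by $1$); this is just the W-graph computation for $H(q,2)$ carried out explicitly in Section 3.2. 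Therefore $\mathrm{rank}\, e_1$ equals the number of chains terminating at {\tiny\yng(1,1)}, which read in reverse is exactly the number of descending paths from $\lambda_0 = $ {\tiny\yng(1,1)} up to $\lambda$ in Young's lattice. The main point requiring care will be the identification of the one-dimensional $H(q,2)$-summand at the bottom of each chain with the irreducible labeled by $\lambda_2$, together with the correct eigenvalue of $T_{s_1}$ on each; once this is pinned down the counting argument is immediate.
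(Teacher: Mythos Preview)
Your proposal is correct and follows essentially the same route as the paper: verify idempotency from the quadratic relation, reduce to $e_1$ by conjugacy of the generators, and compute $\mathrm{rank}\,e_1$ via the branching rule. The only cosmetic differences are that the paper invokes ``rank of an idempotent equals its trace'' to pass between conjugate $e_i$, and phrases the branching argument as an induction on $n$ rather than unrolling it all the way down to $H(q,2)$; your explicit identification of $e_1$ as the projector onto the $(-q^{-1})$-eigenspace and the base case $\pi_{\lambda_0}(\sigma_1)=-q^{-1}$ match the paper exactly.
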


\begin{proof}
We can easily check that $e^2_i=e_i$. The rank of an idempotent in $\mathrm{End}(V)$ is equal to its trace. Since all generators of $B_n$ are conjugate in $B_n$, then all $e_i$ have the same rank (trace). It suffice to calculate the rank of $e_1$. We have that $\pi_{\lambda_0}(\sigma_1)=-q^{-1}$. Thus $\pi_{\lambda_0}(e_1)=1$. We will prove the lemma by induction. We restrict the representation of $B_n$ to $B_{n-1}$. Then the image of $\sigma_1$ in $\mathrm{End}(V)$ is $\bigoplus \pi_{\lambda_i}(\sigma_1)$. By the induction argument the rank of $\pi_{\lambda_i}(\sigma_1)$ is the number of paths from $\lambda_0$ to $\lambda_i$. Since the rank of $\pi_{\lambda}(\sigma_1)$ is the sum of ranks f $\pi_{\lambda_i}(\sigma_1)$, we conclude that the rank is equal to the number of paths from the diagram $\lambda_0$ to the diagram $\lambda$. 
\end{proof}

\begin{lemma}
If $\mathrm{dim}(\pi_{\lambda})=d$ and $\mathrm{rank}(e_i)=r$ then
\[ \pi_{\lambda}((\sigma_1 \sigma_2 ... \sigma_{n-1})^n) = q^{n(n-1)\frac{d-2r}{d}} Id_{\pi_{\lambda}}. \]
\label{LongEl}
\end{lemma}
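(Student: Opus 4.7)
The plan is to exploit that $\Delta^2 := (\sigma_1\sigma_2\cdots\sigma_{n-1})^n$ is the standard generator of the center of $B_n$ together with Schur's lemma and a determinant computation.

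First, I would invoke the fact that $\pi_\lambda$ is irreducible (this is part of the Kazhdan--Lusztig statement cited via Theorem \ref{Wrep}, and for a rectangular or arbitrary cell the representation on $D_w/D'_w$ is irreducible). Since $\Delta^2$ is central in $B_n$, Schur's lemma forces $\pi_\lambda(\Delta^2) = c \cdot \mathrm{Id}_{V_\lambda}$ for some scalar $c \in \mathbb{C}(q)$ (really in a suitable extension). The task reduces to identifying $c$.

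Next, I would compute $c$ via the determinant. The key input is Lemma \ref{jone1}: since $e_i = (q - f_i)/(q + q^{-1})$ is an idempotent of rank $r$, the eigenvalues of $f_i$ are $q$ with multiplicity $d - r$ and $-q^{-1}$ with multiplicity $r$. Therefore
\[ \det f_i = q^{d-r}\,(-q^{-1})^r = (-1)^r\, q^{d-2r}. \]
The element $\sigma_1\sigma_2\cdots\sigma_{n-1}$ maps to $f_1 f_2 \cdots f_{n-1}$, so
\[ \det \pi_\lambda(\Delta^2) = \bigl((-1)^r q^{d-2r}\bigr)^{n(n-1)} = q^{n(n-1)(d-2r)}, \]
using that $n(n-1)$ is even so the sign disappears. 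Comparing with $\det(c\,\mathrm{Id}) = c^d$ yields
\[ c^d = q^{n(n-1)(d-2r)}, \qquad \text{so}\qquad c = \zeta\, q^{n(n-1)(d-2r)/d} \]
for some $d$-th root of unity $\zeta$.

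Finally, I would pin down $\zeta = 1$ by a specialization argument. The entries of $\pi_\lambda(\Delta^2)$ are Laurent polynomials in $q$ (the W-graph action is defined over $\mathbb{Z}[q^{\pm 1}]$), so $c$ depends continuously on $q$ viewed as a positive real parameter. At $q = 1$ the Hecke algebra $H(1,n)$ degenerates to the group algebra of $S_n$, and the image of $\sigma_1\cdots\sigma_{n-1}$ is the $n$-cycle $(1,2,\ldots,n)$, whose $n$-th power is the identity; hence $c|_{q=1} = 1$. Choosing the positive real branch of the $d$-th root fixes $\zeta = 1$, which gives the claimed formula. The main obstacle is precisely this last step -- ruling out the spurious $d$-th roots of unity -- but the continuity/specialization argument handles it cleanly since the entries of $\pi_\lambda(\Delta^2)$ are honest Laurent polynomials.
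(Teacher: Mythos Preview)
Your proposal is correct and follows essentially the same route as the paper: use centrality of $\Delta^2$ (the paper implicitly, you via Schur's lemma) to get a scalar, compute that scalar's $d$-th power via $\det f_i = (-1)^r q^{d-2r}$, and eliminate the spurious root of unity by the continuity/specialization argument at $q=1$. The only cosmetic difference is that the paper phrases the last step as ``the center of $S_n$ is trivial'' whereas you compute directly that the $n$-cycle raised to the $n$-th power is the identity.
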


\begin{proof}
The element $(\sigma_1 \sigma_2 ... \sigma_{n-1})^n$ is in the center of $B_n$; thus $\pi_{\lambda}((\sigma_1 \sigma_2 ... \sigma_{n-1})^n)$ is a diagonal matrix whose entries are all equal. Since 
$$\mathrm{det}(\pi_{\lambda}(\sigma_i)) = \mathrm{det}(f_i) = \mathrm{det}(q-qe_i-q^{-1}e_i) = \mathrm{det}(q) \mathrm{det}(1-e_i-q^{-2}e_i) = (-1)^r q^{d-2r},$$
we have that $\mathrm{det}(\pi_{\lambda}((\sigma_1 \sigma_2 ... \sigma_{n-1})^n)) = q^{n(n-1)(d-2r)}$. Hence
\[ \pi_{\lambda}((\sigma_1 \sigma_2 ... \sigma_{n-1})^n) = \omega q^{n(n-1)\frac{d-2r}{d}} \]
for some $d^{th}$ root of unity $\omega$. But $\omega$ depends continuously on $q$. But if we put $q=1$, then we obtain a representation for the symmetric group $S_n$. But the center of $S_n$ is trivial. Hence $\omega = Id_{\pi_{\lambda}}$. 
\end{proof}

\paragraph{Note.} For a rectangular Young diagram $\lambda_{2g+2}$ as in Theorem \ref{main3}, the dimension of the associated representation is equal to 
\[ \binom{2g+2}{g+1} \frac{1}{g+2}. \]
Furthermore, according to Lemma \ref{jone1} the rank of the idempotent $e_i$ is equal to the descending paths from {\tiny\yng(1,1)} to $\lambda_{2g+2}$ of the Young's lattice. But the latter is equal to the number of the descending paths from $\APLbox$ to $\lambda_{2g}$, which is equal to
\[ \binom{2g}{g} \frac{1}{g+1}.  \]

Now we will use Lemmas \ref{jone1} and \ref{LongEl} to construct a representation for $\mathrm{Mod}(\Sigma_{0,2g+2})$.

\begin{proposition}
\label{BrRep}
Consider the representation $\pi_{\lambda}: B_{2g+2} \rightarrow \mathrm{End}(V_{\lambda})$ associated to the Young diagram $\lambda$. We set $\pi'_{\lambda}(\sigma_i) = q^{(2r-d)/d} \pi_{\lambda}(\sigma_i)$. Then the map 
\[J: \mathrm{Mod}(\Sigma_{0,2g+2}) \rightarrow \mathrm{End}(V_{\lambda})\]
defines a representation via $J(H_i) = \pi'_{\lambda}(\sigma_i)$ if and only if $\lambda$ is rectangular.
\end{proposition}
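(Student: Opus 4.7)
The plan is to verify each of the four defining relations of $\mathrm{Mod}(\Sigma_{0,2g+2})$ given in Section 2 for the proposed map $J$. The braid relations $H_i H_{i+1} H_i = H_{i+1} H_i H_{i+1}$ and the far-commutation relations $H_k H_j = H_j H_k$ for $|k-j|>1$ are automatic, since $\pi_\lambda$ already satisfies them and multiplying every generator by a common scalar in the base ring preserves both. For the relation $(H_1 \cdots H_{2g+1})^{2g+2} = 1$, the word $(\sigma_1 \cdots \sigma_{n-1})^n$ (with $n = 2g+2$) has length $n(n-1)$, so the scaling contributes $q^{n(n-1)(2r-d)/d}$; combined with Lemma~\ref{LongEl}, which gives $\pi_\lambda((\sigma_1 \cdots \sigma_{n-1})^n) = q^{n(n-1)(d-2r)/d}\,\mathrm{Id}$, the two factors cancel and the relation holds.

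The substantive part is the remaining sphere relation $H_1 H_2 \cdots H_{n-1}^2 \cdots H_2 H_1 = 1$, and this is where rectangularity enters. Set $A := \sigma_1 \sigma_2 \cdots \sigma_{n-1}^2 \cdots \sigma_2 \sigma_1$; geometrically $A$ is a full twist of strand $1$ around strands $2, \ldots, n$. A short induction on $n$ using the braid relations shows that $A$ commutes with each $\sigma_j$ for $j \geq 2$ in $B_n$. If $\lambda$ is rectangular it has a unique removable corner, so the branching rule of Section~3.1 applied to the subgroup $\langle \sigma_2, \ldots, \sigma_{n-1}\rangle \cong B_{n-1}$ restricts $\pi_\lambda$ to an irreducible representation. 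Schur's lemma then forces $\pi_\lambda(A) = c \cdot \mathrm{Id}$ for some scalar $c$, and $c$ can be computed exactly as in Lemma~\ref{LongEl}: $A$ has word length $2n-2$, so
\[
c^d = \det \pi_\lambda(A) = ((-1)^r q^{d-2r})^{2n-2} = q^{(2n-2)(d-2r)},
\]
giving $c = q^{(2n-2)(d-2r)/d}$ (the $d$-th root of unity ambiguity is killed by specializing $q=1$ and noting that $A$ has trivial image in $S_n$). The scaling factor $q^{(2n-2)(2r-d)/d}$ from $\pi'_\lambda$ then cancels $c$, so $J$ respects the relation.

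For the necessity, suppose $\lambda$ is not rectangular. Then $\lambda$ has at least two distinct removable corners, and the branching rule decomposes $\pi_\lambda|_{\langle \sigma_2, \ldots, \sigma_{n-1}\rangle}$ as a multiplicity-free sum of at least two non-isomorphic irreducibles. By Schur's lemma $\pi_\lambda(A)$ is still block-scalar, but the scalars on distinct summands are Jucys--Murphy-type eigenvalues determined by the content of the removed corner; two distinct corners of a Young diagram necessarily lie on different diagonals and hence have different contents, so these scalars differ, and $\pi_\lambda(A)$ is not a scalar multiple of the identity. Therefore the sphere relation fails for $\pi'_\lambda$ and no such $J$ exists.

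The main technical points are the commutation $[A, \sigma_j] = 0$ for $j \geq 2$ (a clean braid-group induction) and the identification of the eigenvalues of $\pi_\lambda(A)$ on the branching summands in terms of corner contents; once these are in hand, the scalar computation in both directions parallels the determinantal argument of Lemma~\ref{LongEl}.
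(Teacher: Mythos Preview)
Your argument is correct and follows the same overall plan as the paper, but handles the last sphere relation differently. The paper rewrites the element via the braid identity
\[
\sigma_1\sigma_2\cdots\sigma_{n-1}^2\cdots\sigma_2\sigma_1 \;=\; (\sigma_1\cdots\sigma_{n-1})^{n}\,(\sigma_2\cdots\sigma_{n-1})^{-(n-1)},
\]
so that checking the relation reduces to $\pi'_\lambda\bigl((\sigma_2\cdots\sigma_{n-1})^{n-1}\bigr)=1$, and then applies Lemma~\ref{LongEl} a second time to the restriction to $B_{n-1}$; for rectangular $\lambda$ the unique removable corner means the restriction stays irreducible with the \emph{same} $d$ and $r$, so the exponent is unchanged and the relation holds. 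Your route---proving $[A,\sigma_j]=0$ for $j\ge 2$, invoking Schur on the irreducible restriction, and reading off the scalar from the determinant---reaches the same conclusion and is a bit more conceptual. Note incidentally that the paper's identity above gives your commutation statement for free, since each factor on the right is central in $B_n$, respectively $B_{n-1}$.

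For the converse the paper argues that on the branching summands $V_{\lambda_i}$ the relation would force $(d_i-2r_i)/d_i=(d-2r)/d$ for every $i$, which it declares impossible when $k>1$. Your content-of-corners argument is really the same fact dressed differently (the exponent in Lemma~\ref{LongEl} is a content sum, so removing different corners shifts it by different amounts), and is the cleaner way to justify that step; just be aware that the Jucys--Murphy/content formalism you invoke is not set up anywhere in the paper, so if you want the proof to be self-contained you should either supply that identification or fall back on the paper's $(d_i,r_i)$ formulation and check the inequality directly.
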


We note that the homomorphism $J: \mathrm{Mod}(\Sigma_{0,2g+2}) \rightarrow \mathrm{End}(V_{\lambda})$ is called the Jones representation.

\begin{center}
\begin{tikzpicture}[descr/.style={fill=white,inner sep=2.5pt}]
\matrix (m) [matrix of math nodes, row sep=3em,
column sep=3em]
{ B_{2g+2} & \mathrm{GL_{d}(\mathbb{Z}[q^{\pm 1}])} \\
\mathrm{Mod}(\Sigma_{0,2g+2}) \\ };
\path[->,font=\scriptsize]
(m-1-1) edge node[auto] {} (m-2-1)
edge node[auto] {$\pi_{\lambda}$} (m-1-2)
(m-2-1) edge node[auto] {$\pi'_{\lambda}$} (m-1-2);
\end{tikzpicture}
\end{center}

\begin{proof}
We will show that the elements $\pi'_{\lambda}(\sigma_i)$ satisfy the relations of $\mathrm{Mod}(\Sigma_{0,2g+2})$ defined in Section 2. First we assume that $\lambda$ is rectangular. The braid relation and the disjointness relation are satisfied for $\pi'_{\lambda}(\sigma_i)$. By Lemma \ref{LongEl} we have that $\pi'_{\lambda}((\sigma_1 \sigma_2...\sigma_{2g+1})^{2g+2})$ is trivial. We note that in the braid group we have the relation 
$$\sigma_1 \sigma_2 ... \sigma^2_{2g+1} \sigma_{2g}... \sigma_1 = (\sigma_1 \sigma_2...\sigma_{2g+1})^{2g+2} (\sigma_2...\sigma_{2g+1})^{-(2g+1)}. $$
Then, the condition $\pi'_{\lambda}( \sigma_1 \sigma_2 ... \sigma^2_{2g+1} \sigma_{2g}... \sigma_1)=1$ is equivalent to $\pi'_{\lambda}( (\sigma_2...\sigma_{2g+1})^{2g+1})=1$, since we already have that $\pi'_{\lambda}((\sigma_1 \sigma_2...\sigma_{2g+1})^{2g+2})$ is trivial. But the restriction $\pi'_{\lambda} |_{B_{2g+1}}$ when $\lambda$ is rectangular, satisfies the relation $\pi'_{\lambda}(\sigma_2...\sigma_{2g+1})^{2g+1}=1$. We note that by the Young's lattice, the dimension $d$ and the rank $r$ after the restriction above do not change. This proves the `if' part.

If $\lambda$ is not rectangular then $\pi'_{\lambda}$ restricted to $B_{2g+1}$ reduces as the direct sum of representations $\pi'_{\lambda_i}$. For $i\leq k$ and each $\pi'_{\lambda_i}$ we have the numbers $r_i$ and $d_i$ such that $d = \sum^{k}_{i=1}d_i$ and $r = \sum^{k}_{i=1}r_i$. But the only way to have $\pi'_{\lambda_i}(\sigma_2...\sigma_{2g+1})^{2g+1}=1$ for $(d_i-2r_i) / d_i$ is to be $(d-2r)/d$ for all $i$. But this is impossible if $k>1$. 
\end{proof}

Precomposing the map of Proposition \ref{BrRep} with the surjective homomorphism
$$\mathrm{SMod}(\Sigma_g) \rightarrow \mathrm{Mod}(\Sigma_{0,2g+2})$$
we obtain the following corollary.

\begin{corollary}
There is a well defined representation $ \mathrm{SMod}(\Sigma_g) \rightarrow \mathrm{End}(V_{\lambda})$ defined by $T_{c_i} \mapsto q^{(2r-d)/d} \pi_{\lambda}(\sigma_i)$ if and only if $\lambda$ is rectangular.
\end{corollary}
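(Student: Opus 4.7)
The plan is to obtain the corollary by composing the Jones representation of Proposition~\ref{BrRep} with the Birman--Hilden surjection. Set $p \colon \mathrm{SMod}(\Sigma_g) \twoheadrightarrow \mathrm{Mod}(\Sigma_{0,2g+2})$ to be the surjection from the exact sequence recalled in Section~2, which satisfies $p(T_{c_i}) = H_i$ and has central kernel $\langle \iota \rangle$. This is the only structural input needed beyond Proposition~\ref{BrRep}.

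For the ``if'' direction, suppose $\lambda$ is rectangular. Proposition~\ref{BrRep} then supplies a well-defined homomorphism
\[
J \colon \mathrm{Mod}(\Sigma_{0,2g+2}) \longrightarrow \mathrm{End}(V_\lambda), \qquad J(H_i) = q^{(2r-d)/d}\pi_\lambda(\sigma_i).
\]
The composition $J \circ p$ is a representation of $\mathrm{SMod}(\Sigma_g)$, and on generators it sends $T_{c_i}$ to $J(H_i) = q^{(2r-d)/d}\pi_\lambda(\sigma_i)$, as required.

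For the ``only if'' direction, I would argue by contrapositive: if $\lambda$ is not rectangular, then the assignment $T_{c_i} \mapsto q^{(2r-d)/d}\pi_\lambda(\sigma_i)$ cannot extend to a representation of $\mathrm{SMod}(\Sigma_g)$. The generators $T_{c_i}$ satisfy the braid and commutativity relations from $\mathrm{Mod}(\Sigma_{0,2g+2})$ verbatim, while the remaining defining relations of $\mathrm{Mod}(\Sigma_{0,2g+2})$ on the $H_i$ pull back to relations on the $T_{c_i}$ that can only differ from those in $\mathrm{Mod}(\Sigma_{0,2g+2})$ by a central factor in $\langle \iota \rangle$. Since $\iota$ has order two and is central, its image under any hypothetical representation must be a scalar $\varepsilon \in \{\pm 1\}$. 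The proof of Proposition~\ref{BrRep} shows that for non-rectangular $\lambda$ the matrix $\pi'_\lambda\bigl((\sigma_2\cdots\sigma_{2g+1})^{2g+1}\bigr)$ decomposes under the branching rule as a block-diagonal matrix whose scalar blocks $q^{(2g+1)(d_i-2r_i)/d_i}$ are not all equal; consequently the candidate image of the sphere relation is neither $+I$ nor $-I$, contradicting the requirement that it equal $\varepsilon \cdot I$.

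The main obstacle is the ``only if'' part, and specifically tracking exactly how the defining relation $H_1 H_2 \cdots H_{2g+1}^2 \cdots H_2 H_1 = 1$ lifts through $p$: one must rule out the possibility that a stray $\iota$ factor absorbs the failure detected in Proposition~\ref{BrRep}. The argument above handles this by using that $\rho(\iota)$ is a scalar $\pm I$, reducing the question to whether the block-scalar matrix produced by the branching decomposition can equal a single scalar, which by the analysis of Proposition~\ref{BrRep} happens precisely when $\lambda$ is rectangular.
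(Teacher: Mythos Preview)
Your argument is correct and matches the paper's approach. The paper's entire proof is the single sentence preceding the corollary: ``Precomposing the map of Proposition~\ref{BrRep} with the surjective homomorphism $\mathrm{SMod}(\Sigma_g) \rightarrow \mathrm{Mod}(\Sigma_{0,2g+2})$ we obtain the following corollary.'' This is exactly your ``if'' direction.

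Where you go further than the paper is in the ``only if'' direction. The paper does not justify it separately; precomposition with a surjection only immediately yields the forward implication. Your argument fills this gap correctly: since $\ker p = \langle \iota \rangle$ is central of order two and $\pi_\lambda$ is irreducible, any extension of the assignment to $\mathrm{SMod}(\Sigma_g)$ would force the lifted sphere relations to map to $\pm I$, while the branching computation in the proof of Proposition~\ref{BrRep} shows that for non-rectangular $\lambda$ the relevant matrix is block-scalar with unequal blocks and hence not a scalar at all. This is a genuine (and necessary) elaboration beyond what the paper writes down.
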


By using the formula given in the definition of W-graphs, the action of $H(q,6)$ on $[s_1 s_3 s_5]$ give the following matrices for $\mathrm{Mod}(\Sigma_{0,2g+2})$

\[ 
H_1 \mapsto q^{-1/5} \left( \begin{array}{ccccc}
-q^{-1} & 0 & 1 & 0 & 1\\
0 & -q^{-1} & 0 & 1 & 0\\
0 & 0 & q & 0 & 0\\
0 & 0 & 0 & q & 0\\
0 & 0 & 0 & 0 & q\\ \end{array} \right), \,  H_2 \mapsto q^{-1/5} \left( \begin{array}{ccccc}
q & 0 & 0 & 0 & 0\\
0 & q & 0 & 0 & 0\\
1 & 0 & -q^{-1} & 0 & 0\\
0 & 1 & 0 & -q^{-1} & 1\\
0 & 0 & 0 & 0 & q\\ \end{array} \right)
\]

\[ 
H_3 \mapsto q^{-1/5} \left( \begin{array}{ccccc}
-q^{-1} & 1 & 1 & 0 & 0\\
0 & q & 0 & 0 & 0\\
0 & 0 & q & 0 & 0\\
0 & 0 & 0 & q & 0\\
0 & 0 & 0 & 1 & -q^{-1}\\ \end{array} \right), \, H_4 \mapsto q^{-1/5} \left( \begin{array}{ccccc}
q & 0 & 0 & 0 & 0\\
1 & -q^{-1} & 0 & 0 & 0\\
0 & 0 & q & 0 & 0\\
0 & 0 & 1 & -q^{-1} & 1\\
0 & 0 & 0 & 0 & q\\ \end{array} \right)
\]

\[ H_5 \mapsto q^{-1/5} \left( \begin{array}{ccccc}
-q^{-1} & 1 & 0 & 0 & 1\\
0 & q & 0 & 0 & 0\\
0 & 0 & -q^{-1} & 1 & 0\\
0 & 0 & 0 & q & 0\\
0 & 0 & 0 & 0 & q\\ \end{array} \right) \]

In this section we constructed linear representations of the mapping class groups $\mathrm{Mod}(\Sigma_{0,2g+2})$.

\section{Proof of Theorem A}

\paragraph{Representation over the normal closure of a power of a half-twist.} Let $\mathcal{N}(h)$ denote the normal closure of $h$ in $\mathrm{Mod}(\Sigma_{0,2g+2})$. In this section we will use the Jones representation $J : \mathrm{Mod}(\Sigma_{0,2g+2}) \rightarrow \mathrm{End}(V_{\lambda})$ to construct a linear representation for $\mathrm{Mod}( \Sigma_{0,2g+2}) / \mathcal{N}(H^n_i)$. As noted in the introduction, Humphries constructed a linear representation for the group $\mathrm{Mod}( \Sigma_{0,6}) / \mathcal{N}(H^n_i)$, and proved that $\mathrm{Mod}( \Sigma_{0,2g+2}) / \mathcal{N}(H^n_i)$ has infinite order if $n \geq 4$ \cite[Theorem 4]{HU}. Our aim here is to extend Humphries' result for any $g \geq 2$.

Let $\lambda$ be the Young diagram associated to the cell $[s_1s_3s_5...s_{2g+1}]$ as before. Consider the representation $\pi_{\lambda}:B_{2g+2} \rightarrow \mathrm{End}(V_{\lambda})$. Recall from Section 4 that $\pi_{\lambda}(\sigma_i)=f_i$ and $e_i = (q-f_i)/(q+q^{-1})$ for $q^2\neq-1$. Then we have $J(H_i) = q^{(2r-d)/d} (q-(q+q^{-1})e_i)$. We want to compute $J(H^n_i)= q^{n(2r-d)/d} (q-(q+q^{-1})e_i)^n$. Our aim is to modify the representation $J : \mathrm{Mod}(\Sigma_{0,2g+2}) \rightarrow \mathrm{End}(V_{\lambda})$ such that the image of $H^n_i$ will be trivial, and the relations of $\mathrm{Mod}(\Sigma_{0,2g+2})$ will still hold. Then we will have a well defined linear representation for $\mathrm{Mod}(\Sigma_{0,2g+2}) / \mathcal{N}(H^n_i)$. By the binomial theorem, and the fact that $e^j_i=e_i$ for any $j\geq 2$ we have the following:

\begin{eqnarray*}
\begin{tabular}{llll}
$(q-(q+q^{-1})e_i)^n$ & = &  $ \sum^{n}_{j=0} (-1)^j {n \choose j} q^{n-j}(q+q^{-1})^j e^j_i $ \cr
& = &  $q^n + \sum^{n}_{j=1} (-1)^j {n \choose j} q^{n-j}(q+q^{-1})^j e_i$  \cr
& = &  $q^n + e_i (-q^n + e_i \sum^{n}_{j=0} (-1)^j {n \choose j} q^{n-j}(q+q^{-1})^j)$  \cr
& = &  $q^n + e_i ((q-q-q^{-1})^n -q^n)$  \cr
& = &  $q^n + e_i ((-1)^n q^{-n} -q^n)$ \cr
\end {tabular}
\end {eqnarray*}
Hence we have

\begin{equation}\label{equ}
J(H^n_{i})= q^{\frac{2nr}{d}} + e_i((-1)^n q^{2n\frac{r-d}{d}} -q^{\frac{2nr}{d}}).\tag{*}
\end{equation}

\paragraph{Case $\textbf{n}$ odd.} It is convenient to change $q^{2/d}$ to $t$ to obtain
\[ J(H^n_{i})= t^{nr} + e_i((-1)^n t^{n(r-d)} -t^{nr}).  \]
We let $(-1)^nt^d$ be an $n$ root of unity. Then we have $t^{nd} = -1$, and $t^n = (-1)^{1/d}$. If $J'(H_i) = (-1)^{-r/d}J(H_i)$, then $J'(H^n_{i})=1$. To see that the map
\[J' : \mathrm{Mod}(\Sigma_{0,2g+2}) / \mathcal{N}(H^n_{i}) \rightarrow \mathrm{GL}_d(\mathbb{C}).\]
is a homomorphism, we only need to check that $J'(H_{i})$ satisfy the relations of $\mathrm{Mod}(\Sigma_{0,2g+2})$. In fact we only need to check that $(2g+1)(2g+2)r/d$ is even. In that case we would have $(-1)^{(2g+1)(2g+2)r/d}=1$ (see proof of Proposition \ref{BrRep}). In the end of Section 3, we gave two formulas for $d,r$ in terms of $g$. A direct calculation shows that
\[ (2g+1)(2g+2)\frac{r}{d} = (g+1)(g+2) \]
which is an even number.

\paragraph{Case $\textbf{n}$ even.} It is convenient to change $q^{1/d}$ to $t$ to obtain
\[ J(H^n_{i})= t^{2nr} + e_i((-1)^n t^{2n(r-d)} -t^{2nr}).  \]
We let $t^d$ be $n$ root of unity. Then $t^{(d-2r)n}J(T^n_{c_i})=1$. In this case we denote $t^{(d-2r)n}J(H_i)$ by $J'(H_i)$. We want to show that the map
\[J' : \mathrm{Mod}(\Sigma_{0,2g+2}) / \mathcal{N}(H^n_{i}) \rightarrow \mathrm{GL}_d(\mathbb{C}) \]
is a well defined representation. By Lemma \ref{LongEl} the number $(d-2r)(2g+1)(2g+2)$ is an integer multiple of $d$. Therefore $n(d-2r)(2g+1)(2g+2)$ is a multiple of $nd$. Since $t^d$ is a root of unity, then $J'(H_{i})$ satisfy the relations of $\mathrm{Mod}(\Sigma_{0,2g+2})$ (see proof of Proposition \ref{BrRep}).

We note here that the representations $J'$ we constructed are not necessarily irreducible because the parameter $t$ is a root of unity. But this will not be problem because our proof in the next section does not require irreducible representations for $\mathrm{Mod}(\Sigma_{0,2g+2})/ \langle H^n_{i} \rangle$.

\begin{proof}[Proof of Theorem A]
Let $\Sigma_{0,2g+2}$ be a sphere with $2g+2$ marked points. We recall that $\mathrm{Mod}(\Sigma_{0,2g+2})$ is generated by the half-twists $H_i$ for $i\leq 2g+1$. We will prove that $A=(H_1H_2)^6 H_3 (H_1H_2)^6 H^{-1}_3$ has infinite order in $\mathrm{Mod}( \Sigma_{0,2g+2}) / \mathcal{N}(H^n_i)$. In fact we will prove that $J'(A)$ has infinite order in $\mathrm{GL}_d(\mathbb{C})$, and $\mathrm{GL}_d(\mathbb{C})$ respectively, by showing that if $\mu$ is an eigenvalue of either $J'(A)$, then $\mu^n$ will never be trivial for any $n\geq 4$; consequently this means that $J'(A)^n$ will never be the identity matrix for $n\geq 4$.

\paragraph{Case $\textbf{n}$ even.} By the construction of the representation we have
\[ J'(A) = t^{\frac{12}{2}(2r -d)+(d-2r)n} \pi(\sigma_1 \sigma_2)^6 \pi(\sigma_3) \pi(\sigma_1 \sigma_2)^6 \pi(\sigma_3)^{-1}. \]

We denote by $C$ the matrix $ \pi(\sigma_1 \sigma_2)^6 \pi(\sigma_3) \pi(\sigma_1 \sigma_2)^6 \pi(\sigma_3)^{-1}$. We want to prove that $C$ has infinite order. By Theorem 3.8 we have that the first $5\times 5$ block of $C$ are the same for any $g\geq 2$. Denote this $5\times 5$ block by $C'$. We note that $C'$ has $5$ eigenvalues (2 distinct, and 1 repeated).

For $n=6$, if $t^d=\mathrm{exp}(\pi i /3)$, then the absolute value of one of the eigenvalues of $C'$ is approximately equals to $9.8989795$. Hence it has infinite order. It follows that if $n$ is even and it is divided by $3$ then the eigenvalue has infinite order. For $n = 2(3k \pm 1)>6$, put $t^d = \exp((4 \pi i k)/n)$. If $k \rightarrow \infty $ then the absolute value of the same eigenvalue approximately converges to $9.8989795$, thus it has infinite order. This completes the proof when $n$ even.

\paragraph{Case $\textbf{n}$ odd.} By construction we have
\[ J'(A)=(-1)^{\frac{-12r}{d}} t^{6(2r-d)} \pi(\sigma_1 \sigma_2)^6 \pi(\sigma_3) \pi(\sigma_1 \sigma_2)^6 \pi(\sigma_3)^{-1}. \]
We denote by $C$ the matrix $\pi(\sigma_1 \sigma_2)^6 \pi(\sigma_3) \pi(\sigma_1 \sigma_2)^6 \pi(\sigma_3)^{-1}$. Again by Theorem 3.8 we have that the first $5\times 5$ block of $C$ are the same for any $g\geq 2$. We denote this $5\times 5$ block by $C'$. The matrix $C'$ has 3 eigenvalues (2 distinct and 1 repeated).

Let $(-1)^nt^d$ be an $n^{th}$ root of unity. We set $t^d=\exp((3 k \pi i)/ n)$ where $n=(4k \pm 1)$ if $k$ is odd, and $t^d=(-1)^{-n}\exp(((3k-1) \pi i)/ n)$ where $n=(4k \pm 1)$ if $k$ is even. In both cases if $k \rightarrow \infty $ then the absolute value of one of the eigenvalues of $C'$ approximately converges rapidly to $9.5521659$, thus it has infinite order. This completes the proof when $n$ odd.  
\end{proof}

Now we can use Theorem A to prove Corollary \ref{coxthem}.

\begin{proof}[Proof of Corollary \ref{coxthem}]
We want to prove that if $\mathcal{N}(\sigma^n_i)$ is the normal closure of $\sigma^n_i$ in $B_{m}$, then $B_m / \mathcal{N}(\sigma^n_i)$ has infinite order when $n \geq 4$, and $m \geq 5$.

Recall the representations $J' : \mathrm{Mod}(\Sigma_{0,2g+2}) / \mathcal{N}(H^{n}_i) \rightarrow \mathrm{GL}_d(\mathbb{C}) $, when $n$ is even, and $J' : \mathrm{Mod}(\Sigma_g) /  \mathcal{N}(H^{n}_{i}) \rightarrow \mathrm{GL}_d(\mathbb{C}) $ when $n$ is odd. By the surjective homomorphism $B_{2g+2} \twoheadrightarrow \mathrm{Mod}(\Sigma_{0,2g+2})$ defined by $\sigma_i \mapsto H_{i}$ we have
\[ B_{2g+2} / \mathcal{N}(\sigma^n_i) \twoheadrightarrow \mathrm{Mod}(\Sigma_{0,2g+2}) /\mathcal{N}(H^n_{i}) \rightarrow \mathrm{GL}_d(\mathbb{C}) \]
when $n$ is even, and
\[ B_{2g+2} / \mathcal{N}(\sigma^n_i) \twoheadrightarrow \mathrm{Mod}(\Sigma_{0,2g+2}) / \mathcal{N}(H^n_{i}) \rightarrow \mathrm{GL}_d(\mathbb{C})  \]

\paragraph{Case $\textbf{m}$ even.} In this case the theorem follows by the surjectivity of
\[ B_{2g+2} / \mathcal{N}(\sigma^n_i) \twoheadrightarrow \mathrm{Mod}(\Sigma_{0,2g+2}) / \mathcal{N}(H^n_{i}). \]

\paragraph{Case $\textbf{m}$ odd.} Here we want to prove that $B_{2g+1} / \mathcal{N}(\sigma^n_i)$ has infinite order. If we restrict the above representations to $B_{2g+1}$ we get
\[ B_{2g+1} / \mathcal{N}(\sigma^n_i) \rightarrow \mathrm{GL}_d(\mathbb{C}) \]
when $n$ is even, and
\[ B_{2g+1} / \mathcal{N}(\sigma^n_i) \rightarrow \mathrm{GL}_d(\mathbb{C})  \]
when $n$ is odd. These representations above are well defined, by the restriction formula described in Section 3.1. But by the proof of the Theorem A, the elements $(\sigma_1 \sigma_2)^6 \sigma_3 (\sigma_1 \sigma_2)^6(\sigma_3)^{-1}$, and $(\sigma_1 \sigma_2)^6 \sigma_3 (\sigma_1 \sigma_2)^6$ have infinite order in $B_{2g+1} / \mathcal{N}(\sigma^n_i)$.

To complete the proof, we denote by $\mathcal{N'}(\sigma^n_i)$ the normal closure of $\sigma^n_i$ in $B_{4}$, and we denote by $\mathcal{N}(\sigma^n_i)$ the normal closure of $\sigma^n_i$ in $B_{5}$.

We have that $B_4/(\mathcal{N}(\sigma^n_i) \cap B_4) <B_5/ \mathcal{N}(\sigma^n_i)$, and $(\mathcal{N}(\sigma^n_i) \cap B_4)/ \mathcal{N'}(\sigma^n_i) \lhd B_4/ \mathcal{N'}(\sigma^n_i)$. By the third isomorphism theorem we get a surjective homomorphism
\[ B_4/ \mathcal{N'}(\sigma^n_i) \rightarrow B_4/(\mathcal{N}(\sigma^n_i) \cap B_4) <B_5/ \mathcal{N}(\sigma^n_i). \]
But again the elements $(\sigma_1 \sigma_2)^6 \sigma_3 (\sigma_1 \sigma_2)^6$ and $(\sigma_1 \sigma_2)^6 \sigma_3 (\sigma_1 \sigma_2)^6(\sigma_3)^{-1}$ are in $B_4$. Hence, $B_4/ \mathcal{N'}(\sigma^n_i)$ has infinite order. 
\end{proof}

\section{Proof of Theorem B}

In this section we prove the Theorem B. Let $\mathcal{N}(H_i)$ denote the normal closure of a half-twist $H_i$ in $\mathrm{Mod}(\Sigma_{0,2g+2})$. The representation $J: \mathrm{Mod}(\Sigma_{0,2g+2}) \rightarrow \mathrm{End}(V_{\lambda})$ is defined by $J(H_i)=q^{(2r-d)/d}\pi_{\lambda}(\sigma_i)$, where $\sigma_i \in B_{2g+2}$, $\lambda$ is a Young diagram as in Figure 8, and $d$ is the dimension of the representation.

\paragraph{Proof of Theorem B} Consider Equation (\ref{equ}) from previous section. For $n$ even, let $q^{2/d}$ be a $n^{th}$ root of unity. Then we have that $J(H^n_i)$ is trivial. By denoting $J$ by $J'$ we have a well defined linear representation
\[J':\mathrm{Mod}(\Sigma_{0,2g+2}) / \mathcal{N}(H^n_i) \rightarrow \mathrm{GL}_{d}(\mathbb{C}).\]

For $n$ odd, let $-q^2$ be an $n^{th}$ root of unity. In this case $(-1)^{-r/d}J(H^n_i)$ is trivial (see proof of Theorem A). We set $(-1)^{-r/d}J(H_i)=J'(H_i)$ and we have
\[ J':\mathrm{Mod}(\Sigma_{0,2g+2}) / \mathcal{N}(H^n_i) \rightarrow \mathrm{GL}_{d}(\mathbb{C}). \]


Our aim is to prove that the group generated by the elements $J'(H^2_1),J'(H^2_2)$ contains a free nonabelian subgroup. By Young's lattice described in Section 3.1, if we restrict the Jones representation to the subgroup generated by $H_1,H_2$ then the representation $J$ reduces into a direct sum of subrepresentations containing the representation labeled by $\lambda=${\tiny\yng(2,1)}. Therefore, the elements $q^{(2r-d)/d}\pi_{\lambda}(\sigma_i)=\pi'_{\lambda}(\sigma_i)$ where $i=1,2$ are contained in the image of $J$. By Section 2.2 the map $\pi_{\lambda}$ is the Burau representation. If $q^{2}$ is not a primitive root of unity of order in the set $\{ 1, 2, 3, 4, 6, 10 \}$, then $\pi_{\lambda}(PB_3)$ contains a free nonabelian subgroups \cite[Lemma 3.9]{FK}.

\paragraph{Case $\textbf{n}$ even.} If $q^{2/d}$ be $n^{th}$ nonprimitive root of unity such that $n\notin \{ 2,4,6,10 \}$, then $\pi'_{\lambda}(PB_3)<J'(G)$ contains a free nonabelian subgroups.

\paragraph{Case $\textbf{n}$ odd.} If we consider $-q^2$ be $n^{th}$ nonprimitive root of unity such that $n\notin \{ 1,3,5 \}$, then $\pi'_{\lambda}(PB_3)<J'(G)$ contains a free nonabelian subgroups.

\nocite{*}
\bibliographystyle{plain}
\bibliography{Jones_representation.bib}

\end{document}